\theoremstyle{plain}
\newtheorem{theorem}{Theorem}[section]
\newtheorem{theoremx}{Theorem}
\newtheorem{corollary}[theorem]{Corollary}
\newtheorem{lemma}[theorem]{Lemma}
\newtheorem*{theorem*}{Theorem}
\theoremstyle{definition}
\newtheorem{definition}[theorem]{Definition}
\newtheorem{example}[theorem]{Example}
\newtheorem{remark}[theorem]{Remark}
\numberwithin{equation}{subsection}
\newcommand{\m}{\mathfrak{m}}
\newcommand{\NN}{\mathbb{N}}
\newcommand{\sC}{\mathscr{C}}
\newcommand{\Spec}{\operatorname{Spec}}
\newcommand{\Hom}{\operatorname{Hom}}
\newcommand{\p}{\mathfrak{p}}
\newcommand{\q}{\mathfrak{q}}
\newcommand{\n}{\mathfrak{n}}
\newcommand{\e}{\operatorname{e}}
\newcommand{\lr}[1]{{\langle {#1} \rangle}}
\newcommand{\Tr}{\mathrm{Tr}}
\newcommand{\volU}{\mathrm{vol}^{+}}
\newcommand{\volL}{\mathrm{vol}^{-}}
\newcommand{\vol}{\mathrm{vol}}
\newcommand{\bu}{_{\bullet}}
\DeclareMathOperator{\eh}{e}
\DeclareMathOperator{\fsig}{s}
\newcommand{\diffsig}[1]{\operatorname{s}^{\mathrm{diff}}_{#1}}
\DeclareMathOperator{\length}{\ell}
\author{Jack Jeffries}
\address{Centro de Investigaci\'on en Matem\'aticas, M\'exico}
\email{jackjeff@umich.edu}
\author{Ilya Smirnov}
\address{Department of Mathematics, Stockholm University, Sweden}
\email{smirnov@math.su.se}
\begin{document}

\begin{abstract}
For multiplicities arising from a family of ideals
we provide a general approach to transformation rules for a ring extension that is {\'e}tale in codimension one.
Our result can be applied to bound the size of the local {\' e}tale fundamental group of a singularity
in terms of F-signature, recovering a recent result of 
Carvajal-Rojas, Schwede, and Tucker, and differential signature, providing the first
characteristic-free effective bound. 
\end{abstract}

\title{A transformation rule for natural multiplicities}

\maketitle

\section{Introduction}

The F-signature, $\fsig(R)$, of a local ring $(R, \m)$ of positive characteristic is a natural invariant derived from the Frobenius endomorphism. 
There were several notable steps in the theory of F-signature.
Its first traces appear in work of Kunz \cite{KunzReg} and in \cite{SmithVDB} by Smith and Van den Bergh. The definition was 
formally introduced by Huneke and Leuschke in \cite{HLMCM},
while Tucker showed that it is given by a convergent sequence, see \cite{TuckerFSig}.
Aberbach and Leuschke in \cite{AL} showed that F-signature detects strong F-regularity. 
This present paper was motivated by a recent application of F-signature obtained by Carvajal-Rojas, Schwede, and Tucker in
\cite{Javier-Karl-Kevin} where it was shown that the size of the local {\' etale} fundamental group of a strongly F-regular singularity 
is at most $1/\fsig(R)$. 

The motivation for this result came from a question of Koll\'ar \cite{Kollar}, who asked whether the local fundamental group of the germ of a Kawamata log-terminal (KLT)  singularity is finite,
and a partial answer by Xu \cite{Xu}, who showed that the local {\' etale} fundamental group of a  KLT singularity is finite.
The KLT singularities are tightly connected with F-regular singularities in positive characteristic \cite{HaraWatanabe},
thus Carvajal-Rojas, Schwede, and Tucker provided an effective positive characteristic analogue of Xu's result.
Bhatt, Gabber, and Olsson \cite{Bharagav-and-people} showed that Xu's result can be reproved by reduction to positive characteristic, but this proof does not give an explicit bound as in \cite{Javier-Karl-Kevin}. 
Hence it is natural to further search for a direct effective version of Xu's theorem.

A natural tool for such a proof would be F-signature in characteristic zero, a conjectured invariant defined 
as a limiting value of reductions mod $p$. However the existence of such invariant seems to be a very hard problem,
so instead we opt to use differential signature. The differential signature of a local $K$-algebra $R$, denoted $\diffsig{K}(R)$, is a characteristic-free invariant recently introduced by Brenner, {N{\'u}{\~n}ez-Betancourt}, and the first author in \cite{BJNB}. A notable aspect of differential signature is that its nonvanishing is closely related to the KLT property of singularities. However, it remains open whether differential signature is positive for all KLT
singularities.

Differential signature is not the only invariant aiming to fill the gap in characteristic zero; the symmetric signature of Brenner and Caminata \cite{BrennerCaminata, BrennerCaminata2} and normalized volume of a singularity defined by Li \cite{Li, LiLiuXu} are other numerical invariants in characteristic zero that share some important properties with F-signature. 
The key aspect of the proof given by Carvajal-Rojas, Schwede, and Tucker
is a transformation rule for F-signature under a local map $R\to S$ that is module-finite and quasi-{\'e}tale (i.e., \'etale in codimension one). 
Following the approach in \cite{Javier-Karl-Kevin} and keeping these invariants in mind, 
we set a general framework for a transformation rule to hold. We refer the reader to Section~\ref{sec-2} for a precise description of the class of multiplicities specified in the theorem below.

\begin{theoremx}[Theorem~\ref{main-theorem}]
For a class of numerical multiplicities $\mathrm{s}^{\star}(R)$ on local rings $R$ that includes differential signature and F-signature, the following holds:

If $(R,\m,k)\subseteq (S,\n,l)$ are local excellent normal Henselian domains and the inclusion map is finite, splits as $R$-modules, and is quasi-{\' e}tale, then
\[[S:R] \, \mathrm{s}^{\star}(R)=[l:k] \, \mathrm{s}^{\star}(S)\]
where $[S:R]$ denotes the rank of $S$ as an $R$-module.
\end{theoremx}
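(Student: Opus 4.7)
The plan is to realize each invariant as a normalized limit $\mathrm{s}^{\star}(R)=\lim_e \ell_R(R/I_e(R))/d_e$ for a distinguished family of $\m$-primary ideals $\{I_e(R)\}$, with the normalization $d_e$ independent of the ring (and depending only on $e$ and $\dim R = \dim S$), and to deduce the transformation rule from two compatibility properties of this presentation under the extension $R\subseteq S$: an \emph{additivity} axiom for the invariant on finitely generated torsion-free modules, and a \emph{base-change} identity $I_e(R)\,S = I_e(S)$.

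Using the split, write $S = R\oplus M$ as $R$-modules, with $M$ finitely generated torsion-free of rank $[S:R]-1$. Applying the additivity axiom to each summand yields
\[
\lim_e \frac{\ell_R(S/I_e(R)S)}{d_e} \;=\; [S:R]\cdot \mathrm{s}^{\star}(R).
\]
On the other hand, any $S$-module $N$ of finite $S$-length satisfies $\ell_R(N) = [l:k]\,\ell_S(N)$, so
\[
\lim_e \frac{\ell_R(S/I_e(S))}{d_e} \;=\; [l:k]\cdot \mathrm{s}^{\star}(S).
\]
The base-change identity $I_e(R)\,S = I_e(S)$ then equates the two left-hand sides, giving $[S:R]\,\mathrm{s}^{\star}(R) = [l:k]\,\mathrm{s}^{\star}(S)$.

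The main obstacle is the base-change identity $I_e(R)\,S = I_e(S)$, or at least its asymptotic form $\ell_R(S/I_e(R)S) - \ell_R(S/I_e(S))=o(d_e)$, which is where the \'etale-in-codimension-one hypothesis is consumed. For F-signature, $I_e(R)$ is determined by the reflexive $R$-module $\Hom_R(F^e_* R, R)$, and its extension to $S$ is reflexive and agrees with $\Hom_S(F^e_* S, S)$ on the \'etale locus, whose complement has codimension $\geq 2$; normality then forces global equality. The parallel statement for differential signature uses the modules $D^n_R$ of differential operators in place of Frobenius pushforwards, with the \'etale-in-codimension-one hypothesis guaranteeing $D^n_R\otimes_R S \cong D^n_S$ off a codimension-two locus. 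The Henselian and normality assumptions keep us local so that the residue-field and reflexive-module comparisons are clean, while excellence controls normalization and completion steps used along the way. The delicate point is to phrase conditions on the family $\{I_e\}$ in Section~\ref{sec-2} that encapsulate this base-change behavior and the additivity above uniformly, so that the same argument handles both F-signature and differential signature without separate case analysis.
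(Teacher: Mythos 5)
Your skeleton agrees with the paper's in the easy direction: the paper also writes $S$ (up to torsion) as a free $R$-module of rank $[S:R]$, uses the contraction property $A(R)_n=A(S)_n\cap R$ together with a boundedness hypothesis (to kill the torsion contribution, which is $O(n^{\dim R-1})$), and uses $\ell_R(N)=[l:k]\,\ell_S(N)$. That part of your plan, which amounts to the containment $I_e(R)S\subseteq I_e(S)$, only yields the inequality $[l:k]\,\mathrm{s}^{\star}(S)\leq [S:R]\,\mathrm{s}^{\star}(R)$ (the paper's Lemma~\ref{one way}).

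The genuine gap is your ``base-change identity'' $I_e(R)S=I_e(S)$, which carries the entire content of the reverse inequality. First, it is false as stated: for $R=k\llbracket x^2,xy,y^2\rrbracket\subset S=k\llbracket x,y\rrbracket$ with $p$ odd, one has $I_q(S)=(x^q,y^q)$ while $I_q(R)S\subseteq \n(x^q,y^q)$, since every element of $(x^q,y^q)\cap R$ has degree $>q$. Second, your proposed justification cannot work even for the asymptotic version: the ideals $I_e(R)S$ and $I_e(S)$ are $\n$-primary, so they automatically agree away from the closed point (which has codimension $\geq 2$); ``agreement in codimension one plus normality forces equality'' is a statement about reflexive (divisorial) modules and says nothing about $\m$-primary ideals. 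The paper's actual mechanism for the hard direction is entirely different and is the missing idea here: pass to the Galois closure $M$ of $L/K$ and the integral closure $T$ of $S$ in $M$ (still \'etale in codimension one over $S$ by a local-fields argument), use that the ideals $A(T)_n$ are \emph{characteristic} (Galois-invariant) to show $\Tr_{L/K}(A(S)_n)\subseteq A(R)_n$, use the splitting (via a unit multiple of the trace, which generates $\Hom_R(S,R)$ precisely because the map is \'etale in codimension one) to get the reverse containment $A(R)_n\subseteq \Tr_{L/K}(A(S)_n)$, and then embed $S$ into a free module $F$ of rank $[S:R]$ whose coordinate functionals are multiples of $\Tr_{L/K}$, so that $\ell_R(S/A(S)_n)\geq [S:R]\,\ell_R(R/A(R)_n)-O(n^{\dim R-1})$. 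Note that this is also where the ``characteristic ideal'' hypothesis and the splitting hypothesis are consumed; your proposal uses the splitting only to write $S=R\oplus M$ and never uses the characteristic condition, which is a sign that the decisive step is absent.
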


The class of numerical multiplicities  in the theorem consists of numerical multiplicities computed by a sequence of ideals, and this framework
allows us to treat F-signature and differential signature uniformly. However,
it is yet not clear whether the other two invariants mentioned above can be computed in this way.

As a consequence of this theorem, we can show that differential signature gives characteristic-free bounds on local \'etale fundamental groups.

\begin{theoremx}[Theorem~\ref{cor-fundl-group}]
	Let $(R,\m,k)$ be the Henselization of a local ring essentially of finite type over a separably closed field $K\cong k$. Suppose that $R$ is a normal domain of dimension at least two. If $K$ has positive characteristic, assume also that $R$ is F-pure. 
	
	If the differential signature of $R$ is positive, then for any open subset $U$ of $\Spec(R)$ for which the complement has codimension at least two,  $\pi_1^{\text{\'et}}(U)$ is bounded above by ${1/\diffsig{K}(R) < \infty}$.
\end{theoremx}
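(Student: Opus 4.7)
The plan is to bound the order of every finite continuous quotient of $\pi_1^{\text{\'et}}(\Spec^\circ(R))$ uniformly in terms of $1/\diffsig{K}(R)$, and then use that a profinite group whose finite continuous quotients are uniformly bounded is itself finite of at most that order.

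Given a finite quotient $G$, Grothendieck's Galois theory of the \'etale site produces a connected finite \'etale Galois cover $Y\to \Spec^\circ(R)$ of group $G$ and generic degree $|G|$. Let $S$ denote the integral closure of $R$ in the function field of $Y$; then $S$ is a normal domain, module-finite over $R$, with $[S:R]=|G|$, and since $R$ is Henselian and $\Spec S$ is connected, $S$ is local with residue field $l$. Zariski--Nagata purity of the branch locus, which applies because $R$ is normal of dimension at least two, ensures that $R\to S$ is \'etale in codimension one.

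I then verify the remaining hypotheses of Theorem~A. Excellence and Henselianity of $S$ are inherited from $R$. Next I argue that $[l:k]=1$: in characteristic zero, $k\cong K$ is algebraically closed and the claim is immediate; in positive characteristic the assumption that $R$ is $F$-pure together with the strict Henselianity of $R$ and \'etaleness in codimension one rules out a nontrivial (necessarily purely inseparable) extension $l/k$, as in \cite{Javier-Karl-Kevin}. For the splitting of $R\hookrightarrow S$ as $R$-modules: in characteristic zero, the normalized trace $\tfrac{1}{[S:R]}\Tr_{S/R}\colon S\to R$ is an $R$-linear retraction because $R$ is a normal domain; in positive characteristic, the retraction is built from $F$-purity combined with \'etaleness in codimension one, again following \cite{Javier-Karl-Kevin}.

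With the hypotheses in place, Theorem~A yields
\[
[S:R]\cdot \diffsig{K}(R) \;=\; [l:k]\cdot \diffsig{K}(S) \;=\; \diffsig{K}(S).
\]
Combined with the general bound $\diffsig{K}(S)\leq 1$ from \cite{BJNB}, this gives $|G|=[S:R]\leq 1/\diffsig{K}(R)$. Since the bound is independent of the finite quotient $G$, every finite continuous quotient of $\pi_1^{\text{\'et}}(\Spec^\circ(R))$ has order at most $1/\diffsig{K}(R)$, and profiniteness forces the group itself to be finite of order at most $1/\diffsig{K}(R)$.

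The main obstacle is the positive-characteristic verification of the two hypotheses that are automatic in characteristic zero: namely, that $l=k$, and that $R\hookrightarrow S$ splits. Both should follow from $F$-purity of $R$ together with codimension-one \'etaleness, paralleling the F-signature treatment in \cite{Javier-Karl-Kevin}, but the precise deduction must be adapted carefully. A secondary, more routine point is confirming the normalization $\diffsig{K}(S)\leq 1$ in the generality needed here, which relies on the foundational results of \cite{BJNB}.
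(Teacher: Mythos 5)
Your overall strategy is the same as the paper's: reduce to bounding $[S:R]$ for a module-finite extension \'etale away from the closed point, verify the splitting and the triviality of the residue field extension, apply the transformation rule, and finish with $\diffsig{K}(S)\leq 1$. However, the two points you defer are exactly where the real content lies, and one of them is not merely a matter of ``adapting carefully.'' In positive characteristic, $F$-purity together with \'etaleness in codimension one does \emph{not} by itself produce the $R$-module splitting of $R\hookrightarrow S$: the argument in \cite{Javier-Karl-Kevin} that you point to runs on strong $F$-regularity (positivity of the $F$-signature), not on $F$-purity. The missing idea is that you must use the standing hypothesis $\diffsig{K}(R)>0$ here: $F$-pure plus positive differential signature implies strongly $F$-regular by \cite[Theorem~5.17]{BJNB}, and a strongly $F$-regular ring is a direct summand of every module-finite extension. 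Without invoking positivity of the differential signature at this step, the splitting can fail (an $F$-pure ring need not be a splinter), so your proposed deduction has a genuine gap. Relatedly, for $[l:k]=1$ the clean route is that the splitting plus codimension-one \'etaleness forces $l/k$ separable (\cite[Lemma~2.15]{Javier-Karl-Kevin}), hence trivial over a separably closed field; this again sits downstream of the splitting you have not yet established.

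The second gap is the inequality $\diffsig{K}(S)\leq 1$, which you call routine. The normalization $\diffsig{K}\leq 1$ is proved in \cite{BJNB} for local rings essentially of finite type over $K$ with a coefficient field, but the $S$ you construct is a priori only a module-finite extension of a Henselian local ring, so it is not in that class. The paper closes this by an explicit descent: writing $R=T^h$ as a filtered colimit of \'etale $T$-algebras, the finitely many structure constants of $S$ over $R$ descend to some \'etale neighborhood $U$, giving a local ring $V$ essentially of finite type over $K$ with $S\cong V\otimes_U R\cong V^h$; one then needs the additional fact that differential signature is unchanged by Henselization (Lemma~\ref{lem-leq1}, which uses base change of differential operators along the formally \'etale map $V\to V^h$). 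Neither the descent nor the Henselization-invariance is automatic from \cite{BJNB}, so this step needs to be supplied. The reduction to bounding finite quotients of the profinite group, and the identification of $S$ as the integral closure of $R$ in the cover (with $R\to S$ \'etale in codimension one simply because the closed point has codimension $\geq 2$ --- purity of the branch locus is not needed in this direction), are fine and match the paper.
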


We note that while differential signature is known to be positive for large classes of KLT singularities, it remains open whether it is positive for all KLT singularities.

It should be noted that the transformation rule of \cite{Javier-Karl-Kevin} and its application
were further generalized in a different direction in \cite{JavierTorsor}. 
These generalizations 
are based on a different conceptual understanding of F-signature via Frobenius splittings and heavily exploit the Frobenius endomorphism. We do not know whether our approach can be extended to this level of generality.

\section{Natural multiplicities}\label{sec-2}

In this section, we define the properties of asymptotic multiplicities that we use to prove the transformation rule for module-finite maps.

\begin{definition}
\begin{itemize}
	\item In this paper, by a \emph{family of ideals} in a ring $R$, we mean a sequence $I\bu$ of ideals indexed by an infinite subset $\Lambda\subseteq \NN$. We may say that $I\bu$ is \emph{indexed by $\Lambda$}.
	\item An \emph{assignment of a family of ideals}, $A$, consists of a subcategory $\mathcal{R}$ of the category of rings, an infinite subset $\Lambda\subseteq \NN$, and a rule that assigns to each $R\in \mathcal{R}$ a family of ideals $A(R)\bu$ indexed by $\Lambda$.
\end{itemize}
\end{definition}

\begin{example} A few assignments of families of ideals that have been of interest in local algebra include:

\begin{itemize} 	
\item\textbf{Powers of $\m$:} On the category of local rings, the rule $\mathcal{P}$ that assigns to each local ring $(R,\m)$ and integer $n$ the $n$th power of the maximal ideal $\mathcal{P}(R)_n\colonequals\m^n$.

\item\textbf{Frobenius powers of $\m$:} On the category of local rings of characteristic $p>0$, the rule $\mathcal{FP}$ that assigns to each local ring $(R,\m)$ of characteristic $p>0$ and power $q=p^e$ of $p$ the $q$th \emph{Frobenius power} of the maximal ideal, \[\mathcal{FP}(R)_{q}\colonequals\m^{[q]}\colonequals (f^{q} \ | \ f\in \m).\]

\item \textbf{Differential powers of $\m$:} On the category of local $K$-algebras for a field $K$, the rule $\mathcal{DP}$ that assigns to each local ring $(R,\m)$ and integer $n$ the $n$th \emph{differential power} of the maximal ideal \[\mathcal{DP}(R)_n\colonequals\m^{\lr{n}_K}\colonequals\{ f \in R \ | \ D^{n-1}_{R|K} \cdot f \subseteq \m\}.\]
Here, $D^{n-1}_{R|K}$ denotes the $K$-linear \emph{differential operators} on $R$ of order at most $n-1$, in the sense of Grothendieck \cite[\S16.8]{EGAIV}. We refer the reader to \cite[\S2.1]{SurveySP} and \cite[\S3]{BJNB} for an introduction to differential powers of ideals.
\item\textbf{Splitting ideals:} On the category of local rings of characteristic $p>0$, the rule $\mathcal{SI}$ that assigns to each local ring $(R,\m)$ of characteristic $p>0$ and power $q=p^e$ of $p$ the \emph{$q$th splitting ideal}, 
\[\mathcal{SI}(R)_n\colonequals I_q(R)\colonequals\{ f \in R \ | \ \sC^{q}_R \cdot f \subseteq \m \}.\]
Here $\sC^{q}_R$ denotes the set of \emph{Cartier maps of level $e$} or \emph{$p^{-e}$-linear maps} on $R$. If $R$ is reduced, we may identify $\sC^{q}_R$ with $\Hom_R(R^{1/q},R)$. We refer the reader to \cite{Cartiersurv} for an introduction to $p^{-e}$-linear maps. We note here that our choice of indexing (by $q$ rather than $e$) is nonstandard. 
\end{itemize}
\end{example}

\begin{remark}
Throughout the paper, the morphisms in the category of local rings are local homomorphisms (i.e., map the maximal ideal of the source into the maximal ideal of the target). The categories of local rings of characteristic $p>0$, and of local $K$-algebras are taken to be full subcategories of the category of local rings.
\end{remark}

\begin{definition}
	Let $(R,\m)$ be a local ring of dimension $d$. For a sequence $I\bu$ of 
	$\m$-primary ideals indexed by $\Lambda$ 
	we define the \textit{upper volume} and \textit{lower volume} of $I\bu$ to be, respectively,
	\[\volU_R(I\bu)\colonequals\limsup\limits_{i\in \Lambda} \frac{\ell_R(R/I_i)}{i^d} \qquad \text{and} \qquad \volL_R(I\bu)\colonequals\liminf\limits_{i\in \Lambda} \frac{\ell_R(R/I_i)}{i^d}.\]
\end{definition}

\begin{definition} The \emph{upper multiplicity associated to an assignment $A$ of a family of $\m$-primary ideals} is the rule that assigns to each ring $R\in \mathcal{R}$ the number $\e_A^+(R)\colonequals \volU_R(A(R)\bu)$. The lower multiplicity $\e_A^-(R)\colonequals \volL_R(A(R)\bu)$ is defined analogously. If these two numbers coincide, we write $\e_A(R)$ for the common value.
\end{definition}

\begin{example} Many of the multiplicities of interest in local algebra are defined as multiplicities associated to assignments of families of $\m$-primary ideals. In particular, by definition, we have:
	\begin{itemize}
		\item \textbf{Hilbert--Samuel multiplicity:} $\displaystyle \e(R) \colonequals \frac{1}{\dim(R)!}\e_{\mathcal{P}}(R)$.
		\item \textbf{Hilbert--Kunz multiplicity:} $\e_{HK}(R)\colonequals \e_{\mathcal{FP}}(R)$.
		\item \textbf{Differential signature:} $\displaystyle \diffsig{K}(R) \colonequals \frac{1}{\dim(R)!} \e_{\mathcal{DP}}^+(R)$.
		\item \textbf{F-signature:} $\fsig(R)\colonequals \e_{\mathcal{SI}}(R)$.
	\end{itemize}

We note that the definition of F-signature here is not the original definition of Huneke and Leuschke in \cite{HLMCM}
but originates from \cite[Lemma~2.1]{Yao}, \cite[Corollary~2.8]{AE}.
We refer the reader to \cite{CraigSurvey} for a survey on Hilbert--Kunz multiplicity and  F-signature, and to \cite{BJNB} for the definition and basic properties of differential signature.
\end{example}

We now collect some natural properties of multiplicities that the differential signature and F-signature enjoy; verifications are left to Section~\ref{sec-4}.

\begin{definition} Let  $\mathcal{R}$ be a subcategory of the category of rings.
	An ideal $I\subseteq R$ is \emph{characteristic} in $\mathcal{R}$ if for every automorphism $\phi\in \mathrm{Aut}_{\mathcal{R}}(R)$, one has $\phi(I)\subseteq I$.
\end{definition}

\begin{definition}
Let $I\bu$ be a family of ideals in a local ring $(R,\m)$. We say that $I\bu$ is \emph{bounded} if there exists some $a>0$ such that $\m^{an}\subseteq I_n$ for all $n\in \Lambda$.
\end{definition}

\begin{definition}
	Let $\mathcal{R}$ be a subcategory of the category of rings, and $A$ be an assignment of ideals on $\mathcal{R}$. 
Let $\phi \colon R \to S$ be a map in $\mathcal R$.  
We say that $A$ satisfies the \emph{intersection property} for $\phi$ 
if  $A(R)_n=\phi^{-1}(A(S)_n)$ for all $n\in \Lambda$.
\end{definition}

Following standard geometric terminology, we will say that a map $R\to S$ is \emph{quasi-{\' e}tale} if it is {\'e}tale in codimension one, i.e., if for any prime of height one $\q$ of $S$ with contraction $\p=\q \cap R$, the localization $R_{\p} \to S_{\q}$ is \'etale.

Given these definitions, we can state our main transformation rule.

\begin{theorem}[Main Transformation Rule]\label{main-theorem}
	Let $\mathcal{R}$ be a subcategory of the category of local rings. Let $A$ be an assignment of families of ideals on $\mathcal{R}$ such that
	\begin{itemize}
		\item $A(R)_n$ is characteristic in the category $\mathcal{R}$ for all $R\in \mathcal{R}$ and all $n\in \Lambda$,
		\item $A(R)$ is bounded for all $R\in \mathcal{R}$,
		\item $A$ satisfies the intersection property for every morphism in $\mathcal{R}$ that is finite, split, and quasi-\'etale.
	\end{itemize}
Let $(R,\m,k)$ and $(S,\n,l)$ be local excellent normal Henselian domains in $\mathcal{R}$. If there is a finite local map in $\mathcal{R}$ from $R$ to $S$ that splits as $R$-modules and is quasi-{\'e}tale, then
\[[S:R] \e_A^+(R)=[l:k] \e_A^+(S) \qquad \text{and} \qquad [S:R] \e_A^-(R)=[l:k] \e_A^-(S),\]
where $[S:R]$ denotes the rank of $S$ as an $R$-module.
\end{theorem}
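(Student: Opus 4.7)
The plan is to reduce the claim to a comparison of $R$-module lengths and to bound $\ell_R(S/A(S)_n)$ both above and below by $[S:R]\,\ell_R(R/A(R)_n)$ up to $o(n^d)$.

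First, for any $S$-module $N$ of finite length, every composition factor is isomorphic to $l = S/\n$, which has $R$-length $[l:k]$ via $k\hookrightarrow l$, so $\ell_R(N) = [l:k]\,\ell_S(N)$. Boundedness of $A(R)\bu$ (together with $\m S$ being $\n$-primary, since $R\to S$ is finite local) shows $A(S)\bu$ is bounded as well, so $\e_A^\pm(S)$ is well-defined, and
\[[l:k]\,\e_A^\pm(S) \;=\; \limsup_{n\in\Lambda}\ \text{(resp.\ $\liminf$)}\ \frac{\ell_R(S/A(S)_n)}{n^d}.\]
The theorem thus reduces to the asymptotic identity
\[\ell_R(S/A(S)_n) \;=\; [S:R]\,\ell_R(R/A(R)_n) + o(n^d), \qquad (\ast)\]
where $d = \dim R = \dim S$.

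For the upper bound in $(\ast)$, the intersection property gives $A(R)_n = A(S)_n \cap R \subseteq A(S)_n$, whence $A(R)_n S \subseteq A(S)_n$ and a surjection $S\otimes_R R/A(R)_n \twoheadrightarrow S/A(S)_n$. Since $R$ and $S$ are normal and the extension is finite étale in codimension one, $S$ is locally free as an $R$-module of rank $r := [S:R]$ at every height-one prime of $R$. Choosing a short exact sequence $0 \to R^r \to S \to Q \to 0$ with $\dim_R Q \leq d-2$, tensoring with $R/A(R)_n$, and using $\m^{Cn}\subseteq A(R)_n$, the dimension bound on $Q$ forces the contributions from $Q$ (and from $\Tor_1^R(Q,R/A(R)_n)$) to be $O(n^{d-2})$, so
\[\ell_R(S/A(R)_n S) = r\,\ell_R(R/A(R)_n) + o(n^d),\]
and hence $\ell_R(S/A(S)_n) \leq r\,\ell_R(R/A(R)_n) + o(n^d)$.

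For the lower bound, the splitting $S = R\oplus N$ together with the intersection property makes $R/A(R)_n$ a direct $R$-summand of $S/A(S)_n$, which alone gives only $\ell_R(S/A(S)_n) \geq \ell_R(R/A(R)_n)$. To sharpen this to the matching lower bound, I plan to use the characteristic property: the ideals $A(S)_n$ are preserved under $R$-automorphisms of $S$ available in $\mathcal R$. Passing to the Galois closure of $R\to S$ and invoking the intersection and characteristic properties for that extension, the goal is to show that $A(R)_n S = A(S)_n$ up to a submodule of length $o(n^d)$, i.e.\ up to a codim-2 correction invisible at scale $n^d$, yielding $\ell_R(S/A(S)_n) \geq r\,\ell_R(R/A(R)_n) - o(n^d)$. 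The main obstacle is precisely this lower bound: the intersection and splitting properties by themselves only produce a factor of one, whereas the target has a factor of $[S:R]$. Upgrading the crude bound requires combining the characteristic property with the étale-in-codimension-one geometry, and carrying out the Galois reduction carefully so that the intersection property remains available along the way.
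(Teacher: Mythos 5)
Your first inequality, $\ell_R(S/A(S)_n) \le [S:R]\,\ell_R(R/A(R)_n) + o(n^d)$, is essentially the paper's Lemma~\ref{one way} and is fine (the paper only needs the cokernel of $R^{[S:R]}\hookrightarrow S$ to be torsion, not supported in codimension two, but either works). However, the proof stops exactly where the real work begins. For the reverse inequality you state a target --- ``show $A(R)_nS = A(S)_n$ up to a submodule of length $o(n^d)$'' --- and you correctly flag it as the main obstacle, but you give no argument for it; neither the splitting nor the characteristic property yields it directly. That statement is in fact true, but only as a \emph{consequence} of the theorem (comparing $\ell_R(S/A(R)_nS)$ and $\ell_R(S/A(S)_n)$ after the fact), so it cannot serve as an intermediate step without an independent proof, which you do not supply.

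The missing idea is the trace map $\Tr_{L/K}$ of the fraction field extension. Because the extension is \'etale in codimension one, $\Tr_{L/K}$ generates $\Hom_R(S,R)$ as an $S$-module, so the given splitting can be taken to be $\Tr_{L/K}(u\cdot -)$ for a unit $u$, which yields $A(R)_n \subseteq \Tr_{L/K}(A(S)_n)$. For the opposite containment one passes to the integral closure $T$ of $S$ in the Galois closure of $L/K$: it is local because $R$ and $S$ are excellent Henselian (this is where those hypotheses enter), and $S\subseteq T$ is \'etale in codimension one by Lemma~\ref{lem:GaloisClosure}, so the intersection property gives $A(T)_n\cap S = A(S)_n$; since $A(T)_n$ is characteristic it is stable under the Galois group, and summing over the group shows $\Tr_{L/K}(A(S)_n)\subseteq A(R)_n$. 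With the resulting equality $\Tr_{L/K}(A(S)_n)=A(R)_n$ one embeds $S$ into a free module $F\cong R^{[S:R]}$ by components that are $S$-multiples of $\Tr_{L/K}$; then $\Phi(A(S)_n)\subseteq A(R)_nF$, so $S/A(S)_n$ maps onto $F/A(R)_nF$ modulo the image of a torsion module, and Lemma~\ref{lengthCraig} gives $\ell_R(S/A(S)_n)\ge [S:R]\,\ell_R(R/A(R)_n) - O(n^{d-1})$. It is this dual embedding via the trace --- not the surjection $S\otimes_R R/A(R)_n \onto S/A(S)_n$ used for the upper bound --- that converts the crude factor of one from the splitting into the factor $[S:R]$.
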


We will see in Section~\ref{sec-4} below that differential signature and F-signature satisfy the hypotheses of this theorem. Observe that powers and Frobenius powers of $\m$ in a local ring $(R,\m)$ are characteristic ideals, and that the families of powers $\mathcal{P}(R)_n$ and Frobenius powers $\mathcal{FP}(R)_q\colonequals \m^{[q]}$ are bounded. However, the intersection property for maps that are finite, split, and quasi-{\'e}tale fails for both, e.g., for the inclusion $\mathbb{F}_p\llbracket x^2, xy,y^2 \rrbracket \subseteq \mathbb{F}_p\llbracket x, y \rrbracket$. 

We give the proof of this result, after a collecting a few preliminary lemmas, in the next section.


\section{Proof of main transformation rule}

We first focus on one inequality in the statement of Theorem~\ref{main-theorem}. We first recall the following lemma.

 \begin{lemma}\label{lengthCraig}
 	Let $(R,\m,k)$ be a local domain of dimension $d$ and $N$ be a finitely generated torsion $R$-module. Then, 
for any $a>0$ there is a constant $C$ such that for any $n>0$ and any ideal with $\m^{an} \subseteq I$, one has $\length_R(N/IN)<Cn^{d-1}$.
 \end{lemma}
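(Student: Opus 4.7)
The plan is to exploit that $N$ is torsion over the domain $R$ by picking a nonzero element $f \in \Ann_R(N)$. Since $f$ annihilates $N$, it acts as zero on every $\Tor_i^R(N, -)$. Combined with the annihilation by the $\m$-primary ideal $I$, both $N/IN$ and $\Tor^R_1(N, R/I)$ become modules over $R/(I + (f))$. The key auxiliary estimate is
\[
\length_R(R/(I + (f))) \leq \length_R(R/(\m^{kn} + (f))),
\]
which, since $f$ is a nonzerodivisor on $R$ (as $R$ is a domain) and therefore $R/(f)$ has dimension $d - 1$, is at most $C_0 n^{d-1}$ for some constant $C_0$ depending on $R$, $f$, and $k$, by the Hilbert--Samuel function of $R/(f)$ at its maximal ideal.

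For $\length_R(N/IN)$, I would choose a surjection $R^{\beta_0} \onto N$ with $\beta_0 = \mu(N)$. Tensoring with $R/I$, the quotient $N/IN$ is a quotient of $(R/I)^{\beta_0}$; since it is also killed by $f$, it is already a quotient of $(R/(I + (f)))^{\beta_0}$, so $\length_R(N/IN) \leq \beta_0 C_0 n^{d-1}$.

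For $\length_R(\Tor^R_1(N, R/I))$, the naive estimate from a free resolution is only $\beta_1 \length_R(R/I) = O(n^d)$, which is too weak. Instead, I would extend to a partial free resolution $F_1 \to F_0 \to N \to 0$ and write $\Tor^R_1(N, R/I) = Z/B$ with $Z = \ker(F_1/IF_1 \to F_0/IF_0)$ and $B = \IM(F_2/IF_2 \to F_1/IF_1)$. The annihilation $f \cdot \Tor^R_1(N, R/I) = 0$ translates into $f Z \subseteq B$. Applying the length formula to the multiplication-by-$f$ endomorphism on the finite-length module $Z$ gives
\[
\length_R(Z) = \length_R(Z \cap \ker(F_1/IF_1 \xrightarrow{f} F_1/IF_1)) + \length_R(fZ),
\]
and combining with $\length_R(fZ) \leq \length_R(B)$ yields
\[
\length_R(\Tor^R_1(N, R/I)) \leq \length_R(\ker(F_1/IF_1 \xrightarrow{f} F_1/IF_1)) = \beta_1 \length_R((I :_R f)/I).
\]
The standard identity $\length_R((I :_R f)/I) = \length_R(R/(I + (f)))$, valid since $R/I$ has finite length, then closes the estimate at $\beta_1 C_0 n^{d-1}$.

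The main obstacle will be exactly this $\Tor_1$ part. Naive estimates via a free resolution are off by a factor of $n$, and controlling $\mu(\Tor^R_1(N, R/I))$ as an $R$-module directly is not routine, because submodules of a module can require more generators than the ambient module. The $f$-torsion trick above circumvents this by turning the annihilation $f \cdot \Tor^R_1(N, R/I) = 0$ into a bound by the $f$-torsion of the free $R/I$-module $F_1/IF_1$, which is in turn controlled by Hilbert--Samuel in the $(d-1)$-dimensional ring $R/(f)$. Choosing $C := \max(\beta_0, \beta_1) \, C_0$ then yields the desired common constant.
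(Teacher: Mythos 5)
Your proposal is correct. The paper itself only cites Lemmas~3.5 and~3.6 of Huneke's survey, and your argument is essentially a self-contained version of that cited proof: pick a nonzero annihilator $f$ of the torsion module, reduce both length estimates to $\length_R(R/(\m^{kn}+(f)))$ via the identity $\length_R((I:_Rf)/I)=\length_R(R/(I+(f)))$, and conclude with the Hilbert--Samuel function of the $(d-1)$-dimensional ring $R/(f)$.
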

\begin{proof}
This is essentially \cite[Lemma~3.5]{CraigSurvey}.
\end{proof}

The next statement is one of the two inequalities in Theorem~\ref{main-theorem}.

\begin{lemma} \label{one way}
Let $A$ be a bounded assignment of families of ideals and $(R,\m,k) \subseteq (S,\n,l)$ be a module-finite local inclusion of domains
that satisfies the intersection property for $A$.  
Then
\[
[l:k] \eh^+_A (S) \leq [S:R] \eh^+_A(R)
\text{ and }
[l:k]  \eh^-_A(S) \leq [S:R] \eh^-_A(R).
\]
\end{lemma}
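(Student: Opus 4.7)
The plan is to bound $\ell_S(S/A(S)_n)$ in terms of $\ell_R(R/A(R)_n)$ for each $n \in \Lambda$ by comparing $S$ with a free $R$-module of rank $r := [S:R]$, then to divide by $n^d$ (where $d = \dim R$) and take appropriate asymptotic limits.

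Fix $n \in \Lambda$ and set $I := A(R)_n$, $J := A(S)_n$. The intersection property gives $\phi(I) \subseteq J$, so the extended ideal satisfies $IS \subseteq J$, yielding a surjection $S/IS \twoheadrightarrow S/J$ of $R$-modules; hence $\ell_R(S/J) \leq \ell_R(S/IS)$. Since $J$ is $\n$-primary and $S$ is finite over $R$, $S/J$ has finite length both as an $S$-module and as an $R$-module, with each $S$-composition factor $l = S/\n$ contributing $R$-length $[l:k]$; thus $\ell_R(S/J) = [l:k]\, \ell_S(S/J)$.

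Next, since $R \to S$ is a module-finite local map of domains, $S$ is a torsion-free $R$-module of rank $r$. I would pick any $R$-linear injection $R^r \hookrightarrow S$ (e.g., by lifting a $\mathrm{Frac}(R)$-basis of $\mathrm{Frac}(S)$ into $S$), with cokernel $N$ a finitely generated torsion $R$-module. Right-exactness of $- \otimes_R R/I$ applied to $0 \to R^r \to S \to N \to 0$ gives
\[
(R/I)^r \to S/IS \to N/IN \to 0,
\]
from which $\ell_R(S/IS) \leq r\, \ell_R(R/I) + \ell_R(N/IN)$. Combining with the previous paragraph,
\[
[l:k]\, \ell_S(S/J) \leq [S:R]\, \ell_R(R/I) + \ell_R(N/IN).
\]

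To finish, divide by $n^d$ and take $\limsup$ (for the upper inequality), or take $\liminf$ together with $\liminf(a_n + b_n) \leq \liminf(a_n) + \limsup(b_n)$ (for the lower inequality). Both reduce to showing that the error term $\ell_R(N/IN)/n^d \to 0$ as $n \to \infty$ in $\Lambda$; this is the main technical point. Provided one has a containment $\m^{Cn} \subseteq A(R)_n$ for some constant $C$ independent of $n$ (i.e., the boundedness hypothesis available whenever this lemma is applied), Lemma~\ref{lengthCraig} delivers $\ell_R(N/IN) = O(n^{d-1})$, so the error term is indeed negligible and the claimed inequalities follow.
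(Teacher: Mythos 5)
Your proposal is correct and follows essentially the same route as the paper: both compare $S$ with a free $R$-module of rank $[S:R]$ via a short exact sequence with torsion cokernel, use the intersection property to reduce the free part to $[S:R]\,\length_R(R/A(R)_n)$, and kill the torsion error term with Lemma~\ref{lengthCraig} together with the boundedness hypothesis (which, as you correctly note, is not in the lemma's statement but is available wherever it is applied — the paper's proof invokes it the same way).
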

\begin{proof} The proofs will be identical for $\eh_A^+$ and $\eh_A^-$, so we will use the symbol $\eh_A$ 
to denote either.

There is a short exact sequence of $R$-modules $0 \rightarrow F \stackrel{\phi}{\rightarrow} S \rightarrow T \rightarrow 0$, where $T$ is a torsion module and $F$ is a free module of rank $[S:R]$. This gives for each $i$ a short exact sequence of $R$-modules
	\[
	0 \rightarrow F/\phi^{-1}(A(S)_i) \rightarrow S/A(S)_i \rightarrow S/(A(S)_i + \phi(F)) \rightarrow 0.
	\]
By the intersection propery $A(R)_iS = (A(S)_i \cap R)S \subseteq A(S)_i$, so, 
since $\phi$ is $R$-linear, we have
	$\phi \big(A(R)_i F\big) \subseteq A(S)_i$.
Therefore
\[\length_R\big(F/\phi^{-1}(A(S)_i)\big)\leq \length_R\big( F / A(R)_i F \big) = [S:R]\length_R\big(R/A(R)_i\big)\]
and there is a surjection of $R$-modules
	\[T/A(R)_i T \cong S/(A(R)_i S + \phi(F)) \twoheadrightarrow S/( A(S)_i + \phi(F))\]
giving that
$\length_R (S/( A(S)_i + \phi(F)))\leq\length_R (T/A(R)_iT)$.

Furthermore, from the same exact sequence we obtain that
\begin{align*}
[l:k]\length_S (S/A(S)_i)=
\length_R (S/A(S)_i) &= \length_R (F/\phi^{-1}(A(S)_i)) + \length_R (S/(A(S)_i + \phi(F))) \\
&\leq  [S:R]\length_R\big(R/A(R)_i\big) + \length_R (T/A(R)_i T).
\end{align*}
Because $A$ is bounded, it follows from Lemma~\ref{lengthCraig} that $\length_R (T/A(R)_iT) = O(i^{d - 1})$
and the claim follows after passing to the limit. 
\end{proof}

See also \cite[Corollary~4.13]{TuckerFSig} for a result similar to the above for F-signature.

We now pursue the other inequality. We will employ the trace map associated to a finite field extension. To begin, we will recall some basic properties of the trace map.

\begin{lemma}\label{lem:trace}
	Let $(R,\m,k)\subseteq (S,\n,l)$ be a module-finite local inclusion of domains, with fraction fields $K$, $L$, respectively.
	\begin{enumerate}
		\item \label{lem:trace-7} $\Tr_{L/K} \neq 0$ if $L$ is separable over $K$.
		\item \label{lem:trace-6} If $M/L/K$ is a tower of fields, then $\Tr_{M/K}=\Tr_{L/K}\circ \Tr_{M/L}$.
		\item\label{lem:trace-1} If $L/K$ is Galois, then $\Tr_{L/K}(x)=\sum_{\sigma\in \mathrm{Gal}(L/K)}\sigma(x)$.
		\item\label{lem:trace-2} If $R$ and $S$ are normal, then $\Tr_{L/K}(S)\subseteq R$.
		\item\label{lem:trace-3} If $R$ and $S$ are normal, then $\Tr_{L/K}(\n)\subseteq \m$.
		\item\label{lem:trace-5} If $R$ and $S$ are normal, then $\Tr_{L/K}$ generates $\Hom_R(S,R)$ as an $S$-module if and only if the inclusion of $R$ into $S$ is quasi-{\'e}tale.
	\end{enumerate}
\end{lemma}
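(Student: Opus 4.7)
Parts \ref{lem:trace-7}, \ref{lem:trace-6}, and \ref{lem:trace-1} are standard field-theoretic facts: the nondegeneracy of the trace pairing for a separable extension, the composition rule for traces in a tower, and the expression of the trace as a sum over the Galois group in the Galois case. I would refer to a standard reference such as Lang's \emph{Algebra} for these.

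For \ref{lem:trace-2}, the plan is to pass to a normal closure $\tilde L$ of $L/K$ with $\tilde S$ the integral closure of $R$ inside $\tilde L$. For $s \in S$, the trace $\Tr_{L/K}(s)$ equals (up to an inseparability factor coming from \ref{lem:trace-6}) the sum of the images of $s$ under all $K$-embeddings of $L$ into $\tilde L$. Each such image lies in $\tilde S$, so the sum is integral over $R$; since it already lies in $K$ and $R$ is normal, it lies in $R$. For \ref{lem:trace-3} I note further that every maximal ideal of $\tilde S$ contracts to $\n$ in $S$ (because $S$ is local and $S \to \tilde S$ is integral), so each element of $\n$ lies in every maximal ideal of $\tilde S$, that is, in the Jacobson radical. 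Since the Galois group of $\tilde L/K$ permutes the maximal ideals of $\tilde S$, all conjugates of $n \in \n$ also lie in the Jacobson radical, hence so does their sum. Combined with \ref{lem:trace-2}, this sum lies in $R \cap \mathrm{Jac}(\tilde S) = \m$.

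Part \ref{lem:trace-5} is the real content and the main obstacle. My plan is to study the $S$-linear map
\[
\psi \colon S \longrightarrow \Hom_R(S,R), \qquad \psi(x)(y) = \Tr_{L/K}(xy),
\]
and show that it is an isomorphism, so that $\Tr_{L/K} = \psi(1)$ generates. Both source and target are rank-one reflexive $S$-modules: the source trivially, the target because it is the $R$-dual of a finitely generated torsion-free module over the normal domain $S$, hence satisfies Serre's condition $S_2$. Tensoring $\psi$ with $L$ recovers the standard isomorphism $L \cong \Hom_K(L,K)$ induced by the trace pairing, which is nondegenerate by \ref{lem:trace-7} since étale in codimension one forces $L/K$ separable at the generic point. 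Hence $\psi$ is injective, and by reflexivity it suffices to check surjectivity after localizing at each height-one prime of $S$. At such a prime the extension of DVRs is étale by hypothesis, and for an étale extension of DVRs the different is trivial, which is precisely the statement that the trace generates the module of $R$-linear functionals. Patching these local surjectivities through the $S_2$ property of the target then gives that $\psi$ is an isomorphism globally; the most delicate step is exactly this last passage from the codimension-one statement to a global one, for which I would invoke the characterization of reflexive modules over normal rings as those equal to the intersection of their localizations at height-one primes.
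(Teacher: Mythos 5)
Your plan is correct, but note that the paper does not actually argue this lemma: it declares parts (1)--(4) standard, cites \cite{Javier-Karl-Kevin} (Lemma~2.10) for part (5), and cites \cite{STFiniteMaps} (Proposition~4.8) for part (6). What you have written out for (6) --- the evaluation map $\psi\colon S\to\Hom_R(S,R)$, $\psi(x)(y)=\Tr_{L/K}(xy)$, between rank-one reflexive $S$-modules, injectivity from nondegeneracy of the trace pairing at the generic point, surjectivity at height-one primes from triviality of the different for an \'etale extension of DVRs, and globalization via $M=\bigcap_{\hgt\p=1}M_\p$ for reflexive modules over a normal domain --- is exactly the argument underlying the cited references, so you have in effect supplied the proof the paper outsources. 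Your Jacobson-radical argument for (5) (every maximal ideal of the integral closure $\tilde S$ of $R$ in the normal closure contracts to $\n$, the Galois group permutes these maximal ideals, hence each conjugate of $n\in\n$ lies in $\mathrm{Jac}(\tilde S)$, and $\mathrm{Jac}(\tilde S)\cap R=\m$) is also sound and arguably cleaner than chasing the citation. Two small points to state explicitly when writing this up: in (4) and (5) you may immediately reduce to the case $L/K$ separable, since otherwise $\Tr_{L/K}=0$ and there is nothing to prove, which disposes of the ``inseparability factor''; and in (6) the hypothesis ``\'etale in codimension one'' is what guarantees both that $L/K$ is separable (the generic point has codimension zero, but separability also follows from \'etaleness at any height-one prime) and that every localization at a height-one prime of $S$ is an \'etale extension of DVRs, which is where the different computation applies.
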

\begin{proof} Statements (1)--(4) are standard. Statement (5) is \cite[Lemma~2.10]{Javier-Karl-Kevin}. Statement (6) appears in a geometric formulation in \cite[Proposition~4.8]{STFiniteMaps}.
	\end{proof}
The following is evident from the Lemma above.

\begin{corollary}\label{cor:tracesplitting}
Let $(R,\m,k)\subseteq (S,\n,l)$ be a module-finite local inclusion of normal domains, with fraction fields $K$, $L$, respectively. If the inclusion is split and quasi-{\'e}tale, then there is a unit $u\in S$ such that $\Tr_{L/K}(u\cdot -)$ is a splitting of the inclusion.
\end{corollary}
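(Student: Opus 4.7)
The plan is to combine the splitting hypothesis with parts of Lemma~\ref{lem:trace}. First, I would invoke the hypothesis that $R \hookrightarrow S$ splits as $R$-modules to produce some $R$-linear map $\psi \colon S \to R$ with $\psi(1) = 1$. Next, since the inclusion is \'etale in codimension one, Lemma~\ref{lem:trace} asserts that $\Tr_{L/K}$ generates $\Hom_R(S,R)$ as an $S$-module; unwinding this $S$-module structure, it means that $\psi$ can be expressed in the form $\psi(x) = \Tr_{L/K}(ux)$ for some element $u \in S$. Evaluating at $x = 1$ then immediately gives $\Tr_{L/K}(u) = 1$.

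The only remaining step is to verify that the element $u$ produced above is actually a unit of $S$. Here I would argue by contradiction: if $u$ lay in the maximal ideal $\n$, then the inclusion $\Tr_{L/K}(\n) \subseteq \m$ from Lemma~\ref{lem:trace} would force $1 = \Tr_{L/K}(u) \in \m$, which is absurd. Since $(S,\n)$ is local, $u \notin \n$ is equivalent to $u$ being a unit, and this completes the argument.

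The proof is short, and in a sense the main ``obstacle'' is really just identifying which pieces of the preceding lemma to combine. The crucial conceptual input is that the trace shrinks the maximal ideal, which is exactly what prevents the $u$ produced by the generation statement from being a non-unit and thereby guarantees that $\Tr_{L/K}(u \cdot -)$ is a genuine splitting rather than an arbitrary $R$-linear map.
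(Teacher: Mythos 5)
Your argument is correct and is precisely the reasoning the paper leaves implicit (the paper offers no written proof, stating only that the corollary ``is evident from the Lemma above''): you use Lemma~\ref{lem:trace}(\ref{lem:trace-5}) to write the given splitting as $\Tr_{L/K}(u\cdot-)$ and Lemma~\ref{lem:trace}(\ref{lem:trace-3}) to rule out $u\in\n$. No gaps.
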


\begin{lemma}\label{lem:image-of-trace}
	Let $(R,\m,k)\subseteq (S,\n,l)$ be a module-finite local inclusion of normal domains, with fraction fields $K$, $L$, respectively. Let $M$ be the Galois closure of $L/K$, and $T$ be the integral closure of $S$ in $M$, and suppose that $T$ is local. Let $\mathcal{R}$ be a full subcategory of the category of local rings that contains $R$,  $S$, and $T$.
	\begin{enumerate}
		\item  If $L/K$ is Galois and $J$ is a characteristic ideal of $S$ in $\mathcal{R}$, then $\Tr_{L/K}(J)\subseteq J\cap R$.
		\item\label{lem:image-of-trace-2} Suppose that there is a unit $u\in T$ such that $\Tr_{M/L}(u\cdot -)$ is a splitting of the inclusion of $S$ into $T$. Let $J\subseteq S$ be an ideal, and suppose that there is a characteristic ideal $J'$ of $T$ in $\mathcal{R}$ such that $J=J'\cap S$. Then $\Tr_{L/K}(J) \subseteq J\cap R$.
	\end{enumerate}
\end{lemma}
\begin{proof}
	\begin{enumerate}
		\item Note that, for $\sigma\in \mathrm{Gal}(L/K)$, the restriction $\sigma|_S$ maps $S$ into $S$ by the hypothesis of normality. The map $\sigma^{-1}|_S$ is an inverse of $\sigma|_S$, so $\sigma|_S$ automorphism of $S$, which necessarily must be local; thus, this map is a morphism in $\mathcal{R}$. For $j\in J$, by Lemma~\ref{lem:trace}(\ref{lem:trace-1}), we have that $\Tr_{L/K}(j)=\sum_{\sigma\in \mathrm{Gal}(L/K)}\sigma(j)$, and since $J$ is characteristic, $g(j)\in J$ for each $g$, so $\Tr_{L/K}(j)\in J$. By Lemma~\ref{lem:trace}(\ref{lem:trace-2}), we also have $\Tr_{L/K}(j)\in R$.
		\item For $j\in J$, by Lemma~\ref{lem:trace}(\ref{lem:trace-6}), we can write $\Tr_{L/K}(j)=\Tr_{L/K}\big(\Tr_{M/L}(uj)\big)=\Tr_{M/K}(u j)\in \Tr_{M/K}(J')$. Applying the first part of the Lemma, we have $\Tr_{L/K}(j)\in \Tr_{M/K}(J')\subseteq J'\cap R = J\cap R$.\qedhere
	\end{enumerate}
\end{proof}

The following lemma is well-known to experts, but we include it for completeness. 

\begin{lemma}\label{lem:GaloisClosure}
	Let $R\subseteq S$ be a module-finite extension of normal domains that is quasi-{\'e}tale. Let $K,L$ be the fraction fields of $R,S$, and $T$ be the integral closure of $S$ in the Galois closure of $L/K$. Then $S\subseteq T$ is quasi-{\'e}tale.
\end{lemma}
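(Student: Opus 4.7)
The plan is to work inside the Galois group $G=\mathrm{Gal}(M/K)$, where $M$ is the Galois closure of $L$ over $K$, and translate the hypothesis into a statement about inertia subgroups of $G$. Normality of $R$ and $S$ ensures that $T^G=R$ and, for $H=\mathrm{Gal}(M/L)$, that $T^H=S$. I would first fix a height-one prime $\mathfrak{q}$ of $T$ and observe that $\mathfrak{p}:=\mathfrak{q}\cap S$ and $\mathfrak{r}:=\mathfrak{q}\cap R$ are also height-one, since $R\subseteq T$ is an integral extension of normal domains (going-down together with incomparability). Write $I_{\mathfrak{q}}\subseteq G$ for the inertia subgroup at $\mathfrak{q}$. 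A standard fact from the Galois theory of complete DVRs is that $R_{\mathfrak{r}}\to T_{\mathfrak{q}}$ is \'etale if and only if $I_{\mathfrak{q}}=1$, and similarly $S_{\mathfrak{p}}\to T_{\mathfrak{q}}$ is \'etale if and only if $I_{\mathfrak{q}}\cap H=1$ (the latter being the inertia for the Galois sub-extension $S\subseteq T$ with group $H$).

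The next step is to translate the hypothesis. Completing at the height-one primes and invoking the formula $|I_{\mathfrak{q}}|=e(\mathfrak{q}/\mathfrak{r})\cdot[k(\mathfrak{q}):k(\mathfrak{r})]_{\mathrm{insep}}$, applied both to $G$ and to its subgroup $H$, together with multiplicativity of ramification indices and inseparability degrees in the tower $R\subseteq S\subseteq T$, yields
\[\frac{|I_{\mathfrak{q}}|}{|I_{\mathfrak{q}}\cap H|}=e(\mathfrak{p}/\mathfrak{r})\cdot[k(\mathfrak{p}):k(\mathfrak{r})]_{\mathrm{insep}}.\]
Thus the \'etaleness of $R\subseteq S$ at $\mathfrak{p}$ (i.e.\ $e(\mathfrak{p}/\mathfrak{r})=1$ and $k(\mathfrak{p})/k(\mathfrak{r})$ separable) is equivalent to $I_{\mathfrak{q}}\subseteq H$. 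By hypothesis this inclusion holds for every height-one prime of $T$.

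The final step is a normal-core argument. For any $g\in G$, the prime $g\mathfrak{q}$ is again height-one in $T$, so the conclusion of the previous paragraph gives $I_{g\mathfrak{q}}\subseteq H$, i.e.\ $gI_{\mathfrak{q}}g^{-1}\subseteq H$. Equivalently,
\[I_{\mathfrak{q}}\subseteq \bigcap_{g\in G}g^{-1}Hg=\mathrm{Core}_G(H).\]
Since $M$ is the Galois closure of $L/K$, the fixed field of $\mathrm{Core}_G(H)$ is a Galois subextension of $M/K$ containing $L$, hence equals $M$; so $\mathrm{Core}_G(H)=1$ and therefore $I_{\mathfrak{q}}=1$. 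In particular $I_{\mathfrak{q}}\cap H=1$, so $S\subseteq T$ is \'etale at $\mathfrak{q}$.

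The main technical nuisance is the presence of inseparable residue field extensions: one has to work with the full formula $|I_{\mathfrak{q}}|=e\cdot[k(\mathfrak{q}):k(\mathfrak{r})]_{\mathrm{insep}}$ rather than just $|I_{\mathfrak{q}}|=e$, and track inseparability degrees carefully so that the hypothesis translates cleanly to $I_{\mathfrak{q}}\subseteq H$. In characteristic zero or with perfect residue fields this bookkeeping is trivial, and in the general case it is standard but deserves care.
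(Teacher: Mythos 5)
Your proof is correct, and it takes a more self-contained route than the paper's. The paper localizes at height-one primes, completes, and reduces to the statement that for valuation rings of local fields the \'etale property is preserved under composita, which it quotes from Neukirch; unpacked, that citation is exactly the inertia-group computation you carry out. What you do differently is to stay global: you encode \'etaleness at a height-one prime $\mathfrak{q}$ of $T$ as triviality of the inertia subgroup via $|I_{\mathfrak{q}}|=e\cdot[k(\mathfrak{q}):k(\mathfrak{r})]_{\mathrm{insep}}$, translate the hypothesis into $I_{\mathfrak{q}}\subseteq H$ for all height-one $\mathfrak{q}$, and then conjugate the prime (rather than the field $L$) to land in $\mathrm{Core}_G(H)=1$. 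This buys a proof with no external input beyond the standard decomposition/inertia formalism for Galois extensions of Dedekind domains, and it actually yields the stronger conclusion that $R\subseteq T$ is \'etale in codimension one, not just $S\subseteq T$. Two small remarks: you do not need completion anywhere --- the formula $|I_{\mathfrak{q}}|=e f_{\mathrm{insep}}$ already holds after localizing $R$ at $\mathfrak{r}$, since $T\otimes_R R_{\mathfrak{r}}$ is a semilocal Dedekind domain finite over the DVR $R_{\mathfrak{r}}$ (finiteness uses that $M/K$ is separable, which follows from \'etaleness in codimension one forcing $L/K$ to be separable); and your reduction of height-one-ness of $\mathfrak{p}$ and $\mathfrak{r}$ to going-down plus incomparability is exactly right and worth keeping explicit, since it is what lets the hypothesis on $R\subseteq S$ be applied at the contraction of an arbitrary height-one prime of $T$.
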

\begin{proof}
	By localizing at height one primes and completing, this reduces to the claim that if $R$ (respectively, $S$) is the valuation ring of a local field $K$ (respectively, $L$), and $S$ is \'etale over $R$, then the valuation ring of the Galois closure of $L/K$ is \'etale over $R$. This follows from the fact that the \'etale property for valuation rings of local fields is preserved under taking the compositum of field extensions \cite[Corollary~7.3]{Neukirch}.
\end{proof}

Now we can prove our main theorem.


\

\noindent \textit{Proof of Theorem~\ref{main-theorem}.} Since $R$ and $S$ are excellent and Henselian local rings, the integral closure $T$ of $S$ in the Galois closure of the fraction fields is module-finite over $S$, and hence is a direct product of local rings; since $T$ is clearly a domain, it thus is local. By Lemma~\ref{lem:GaloisClosure} and the intersection property, $A(T)_n \cap S = A(S)_n$. 

By Corollary~\ref{cor:tracesplitting}, the hypotheses of Lemma~\ref{lem:image-of-trace}(\ref{lem:image-of-trace-2}) are satisfied, so $\Tr_{L/K}(A(S)_n)\subseteq A(R)_n$. By Corollary~\ref{cor:tracesplitting} we obtain the other containment $A(R)_n \subseteq \Tr_{L/K}(A(S)_n)$. In particular, we have equalities $\Tr_{L/K}(A(S)_n) = A(S)_n \cap R = A(R)_n$.

There is a short exact sequence of $R$-modules $0 \rightarrow S \xrightarrow{\Phi}  F \rightarrow C \rightarrow 0$, where $C$ is torsion and  $F$ is a free module of rank $[S:R]$. 
By Lemma~\ref{lem:trace}(\ref{lem:trace-5}), every component of $\Phi$ is a multiple of $\Tr_{L/K}$. Consequently, $\Phi(A(S)_n)\subseteq \bigoplus\limits_{[S:R]} \Tr_{L/K}(A(S)_n)$ for all $n$. Thus, for each $n$, there is an exact sequence 
	\[S/A(S)_n \xrightarrow{\Phi} F / \Tr_{L/K}(A(S)_n)F \rightarrow C/\Tr_{L/K}(A(S)_n)C \rightarrow 0.\]
As in the proof Lemma~\ref{one way}, Lemma~\ref{lengthCraig} shows that 
\[\length_R (C/\Tr_{L/K}(A(S)_n)C) = \length_R (C/A(R)_nC) = O(n^{\dim R - 1}),\]
 so, after passing to the limit
and using that $\length_R (S/A(S)_n) = \length_S (S/A(S)_n) [l:k]$, we have that
	\[
	[l:k]\eh_A (S) \geq [S:R] \eh_A(R).
	\]
But the converse is Lemma~\ref{one way} and the assertion follows.\qed

\section{Applications}\label{sec-4}

Now we present applications of our main theorem to local \'etale fundamental groups of singularities. Our treatment 
follows ideas of \cite{Javier-Karl-Kevin}. We refer to \cite{MilneBook} and \cite{Javier-Karl-Kevin} for an introduction to local \'etale fundamental groups.

First, we aim to show that differential signature and F-signature satisfy the hypotheses of Theorem~\ref{main-theorem}. The next two lemmas address two of these conditions.
 
\begin{lemma}
	\begin{enumerate}
		\item Let $(R,\m,k)$ be a local $K$-algebra with pseudocoefficient field $K$. The ideals $\m^{\lr{n}_K}$ are characteristic in the category of local $K$-algebras.
		\item If $R$ is a local ring containing a field of positive characteristic, then the ideals $I_q(R)$ are characteristic in the category of local rings.
	\end{enumerate}

\end{lemma}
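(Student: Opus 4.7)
The plan is to handle both parts with a uniform conjugation argument. In each case, the ideal in question is defined as those elements sent into $\m$ by every operator in a certain class ($K$-linear differential operators of order at most $n-1$ in part (1), or Cartier maps of level $e$ in part (2)). So if $\phi$ is an automorphism in the appropriate category and I can show that conjugation by $\phi$ preserves the class of operators, then the characteristic property will follow formally from the fact that $\phi(\m)\subseteq \m$.

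For part (1), fix a local $K$-algebra automorphism $\phi$ of $R$. The key claim is that for $\delta \in D^{n-1}_{R|K}$, the $K$-linear endomorphism $\delta':=\phi^{-1}\delta\phi$ again lies in $D^{n-1}_{R|K}$. Using the inductive characterization of order via iterated commutators with multiplication operators, one checks directly that $[f,\delta']=\phi^{-1}[\phi(f),\delta]\phi$, so conjugation by $\phi$ preserves the order filtration. Given this, for $f\in\m^{\lr{n}_K}$ and any $\delta\in D^{n-1}_{R|K}$, I write
\[\delta(\phi(f))=\phi\bigl(\delta'(f)\bigr).\]
Since $\delta'\in D^{n-1}_{R|K}$, I have $\delta'(f)\in\m$, and since $\phi$ is a local map, $\phi(\m)\subseteq\m$. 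Therefore $\delta(\phi(f))\in\m$ for all $\delta\in D^{n-1}_{R|K}$, which means $\phi(f)\in\m^{\lr{n}_K}$.

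For part (2), let $\phi$ be an automorphism of the local ring $R$. The analogous claim is that conjugation by $\phi$ preserves $\sC^{q}_R$. This is immediate from the definition of a $p^{-e}$-linear map: for $\psi\in\sC^{q}_R$ and $r,s\in R$,
\[(\phi^{-1}\psi\phi)(r^{q}s)=\phi^{-1}\bigl(\psi(\phi(r)^{q}\phi(s))\bigr)=\phi^{-1}\bigl(\phi(r)\,\psi(\phi(s))\bigr)=r\,(\phi^{-1}\psi\phi)(s),\]
using that every ring automorphism commutes with the Frobenius. The same identity $\psi(\phi(f))=\phi\bigl((\phi^{-1}\psi\phi)(f)\bigr)$ together with $\phi(\m)\subseteq\m$ then shows that $\phi$ preserves $I_q(R)$.

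There is no real obstacle here; the mild subtlety is just verifying that conjugation lands back in the correct class of operators, and in each case this is a direct consequence of the definition (the commutator characterization of differential-operator order, and the $p^{e}$-th power compatibility built into a Cartier map).
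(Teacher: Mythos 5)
Your proof is correct and follows essentially the same route as the paper: show that conjugation by the automorphism preserves the relevant class of operators (by induction on the commutator characterization of order for differential operators, and by a direct check of $p^{-e}$-linearity for Cartier maps), then deduce the characteristic property formally from $\phi(\m)\subseteq\m$. The only cosmetic difference is that in part (2) you work directly with $p^{-e}$-linear maps rather than with $\Hom_R(R^{1/q},R)$ and an extension $\widetilde{\phi}$ of $\phi$ to $R^{1/q}$, which is an equivalent formulation.
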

\begin{proof}
	Let $\phi\colon R\rightarrow R$ be a $K$-linear local automorphism. First, we claim that $\phi \circ D^n_{R|K} \circ \phi^{-1}=D^n_{R|K}$. For an element $x\in R$, we write $\overline{x}\in D^0_{R|K}$ for the operator of multiplication by $x$. We observe:
	\begin{align*}
	[\phi \circ \delta \circ \phi^{-1}, \overline{x}](y) &=
	(\phi \circ \delta \circ \phi^{-1})(xy) - x \cdot  (\phi \circ \delta \circ \phi^{-1})(y)\\
	&=\big(\phi \circ \delta\big) \big(\phi^{-1}(x) \cdot  \phi^{-1}(y)\big)  - x \cdot  \big(\phi \circ \delta\big) \big( \phi^{-1}(y)\big)\\
	&= \big(\phi \circ \delta \circ \overline{\phi^{-1}(x)}\big)\big(\phi^{-1}(y)\big) - x \cdot  \big(\phi \circ \delta\big)\big( \phi^{-1}(y)\big)\\
	&= \Big(\phi \circ \big([\delta,\overline{\phi^{-1}(x)}]+\overline{\phi^{-1}(x)}\circ \delta\big)\Big)\big(\phi^{-1}(y)\big) - x \cdot \big(\phi \circ \delta\big)\big( \phi^{-1}(y)\big)\\
	&= \big(\phi \circ [\delta,\overline{\phi^{-1}(x)}] \circ \phi^{-1}\big)(y) + \phi\Big(\phi^{-1}(x) \cdot  \delta\big(\phi^{-1}(y)\big)\Big) - x \cdot \big(\phi \circ \delta\big)\big( \phi^{-1}(y)\big)\\
	&= \big(\phi \circ [\delta,\overline{\phi^{-1}(x)}] \circ \phi^{-1}\big)(y),
	\end{align*}
	so $[\phi \circ \delta \circ \phi^{-1}, \overline{x}]=\phi \circ [\delta, \overline{\phi^{-1}(x)}] \circ \phi^{-1}$. The claim then follows from induction on the order of a differential operator. Now, let $f\in \m^{\lr{n}_K}$, and $\delta\in D^{n-1}_{R|K}$. To see that $\phi(f)\in \m^{\lr{n}_K}$, consider $\delta(\phi(f))$. Write $\delta =\phi \circ \nu \circ \phi^{-1}$; we have $\nu\in D^{n-1}_{R|K}$ by the previous claim. Then, $\delta(\phi(f))=(\phi \circ \nu \circ \phi^{-1})(\phi(f))=\phi(\nu(f))\in \m$, since $\nu(f)\in \m$. Thus, $\m^{\lr{n}_K}$ is characteristic.
	
	We argue similarly for $I_q(R)$. Note that any automorphism $\phi$ of $R$ extends uniquely to an automorphism $\widetilde{\phi}$ of $R^{1/q}$ by the rule $\widetilde{\phi}(r^{1/q})=\phi(r)^{1/q}$. We claim that $\phi \circ \Hom_{R}(R^{1/q},R) \circ \widetilde{\phi^{-1}} = \Hom_{R}(R^{1/q},R)$. Indeed, if $\psi\in\Hom_{R}(R^{1/q},R)$, then
	\begin{align*}
	(\phi \circ \psi \circ \widetilde{\phi^{-1}})(r^{1/q} x) &= (\phi \circ \psi)(\widetilde{\phi^{-1}}(r^{1/q}) \cdot \phi^{-1}(x)) =\phi \big( \phi^{-1}(x) \cdot (\psi\circ \widetilde{\phi^{-1}})(r^{1/q})\big)\\
	&=\phi \big( \phi^{-1}(x) \cdot (\psi\circ \widetilde{\phi^{-1}})(r^{1/q})\big) 
	=x\cdot (\phi \circ \psi \circ \widetilde{\phi^{-1}})(r^{1/q}),
	\end{align*}
	so $\phi \circ \psi \circ \widetilde{\phi^{-1}}$ is $R$-linear. Now, let $f\in I_q(R)$, and $\psi\in \Hom_{R}(R^{1/q},R)$. To see that $\phi(f)\in I_q(R)$, consider $\psi(\phi(f)^{1/q}))$. Write $\psi =\phi \circ \eta \circ \widetilde{\phi^{-1}}$; we have $\eta\in \Hom_{R}(R^{1/q},R)$ by the previous claim. Then, \[\psi(\phi(f)^{1/q})=(\phi \circ \eta \circ \widetilde{\phi^{-1}})(\phi(f)^{1/q})=(\phi \circ \eta \circ \widetilde{\phi^{-1}})(\widetilde{\phi}(f^{1/q}))=\phi(\eta(f)) \in \m,\] since $\eta(f)\in \m$. Thus, $I_q(R)$ is characteristic.
\end{proof}

\begin{lemma}\label{lem:contraction} Let $(R,\m,k)\subseteq (S,\n,l)$ be a module-finite local inclusion of normal domains that is split and quasi-{\'e}tale, and let $K\subset R$ be a field.
\begin{enumerate}
\item Differential powers satisfy the intersection property, i.e., $\m^{\lr{n}_K}=R\cap \n^{\lr{n}_K}$ for all $n$.
\item Splitting ideals satisfy the intersection property, i.e., $I_q(R)= R \cap I_q(S)$ for all $q=p^e$.
\end{enumerate}
\end{lemma}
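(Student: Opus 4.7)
The two parts are proved by a parallel template; I plan to write out part (1) in detail and indicate the adaptation needed for part (2).

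For the containment $\m^{\lr{n}_K} \subseteq R \cap \n^{\lr{n}_K}$ I plan to use a trace trick. By Corollary~\ref{cor:tracesplitting}, I can pick a unit $u \in S$ so that $\rho := \Tr_{L/K}(u \cdot -)$ is an $R$-module splitting of the inclusion; in particular $\rho(1) = 1$. Fix $\tilde\delta \in D^{n-1}_{S|K}$ and $s \in S$, and consider $\mu_s \colon R \to R$ given by $\mu_s(r) = \Tr_{L/K}(us\, \tilde\delta(r))$. A direct commutator computation, using that $\Tr_{L/K}|_S \colon S \to R$ is $R$-linear (elements of $R$ sit in the base field $K$ fixed by $\mathrm{Gal}(L/K)$), yields
\[
[\mu_s, \overline{r'}](r) = \Tr_{L/K}\bigl(us\, [\tilde\delta, \overline{r'}](r)\bigr),
\]
so by induction on $n$ (with base case $\tilde\delta \in D^0_{S|K} = S$, where $\mu_s$ is itself a multiplication operator) one gets $\mu_s \in D^{n-1}_{R|K}$. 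Applied to $f \in \m^{\lr{n}_K}$ this forces $\Tr_{L/K}(us\, \tilde\delta(f)) \in \m$ for every $s \in S$. If $\tilde\delta(f)$ were a unit in $S$, choosing $s = \tilde\delta(f)^{-1}$ would give $1 = \rho(1) = \Tr_{L/K}(u) \in \m$, a contradiction; hence $\tilde\delta(f) \in \n$.

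For the reverse inclusion $R \cap \n^{\lr{n}_K} \subseteq \m^{\lr{n}_K}$, I plan to extend each $\delta \in D^{n-1}_{R|K}$ to some $\hat\delta \in D^{n-1}_{S|K}$ with $\hat\delta|_R = \delta$: granted such an extension, for $f \in R$ with $D^{n-1}_{S|K}(f) \subseteq \n$ one has $\delta(f) = \hat\delta(f) \in R \cap \n = \m$. The extension exists canonically when $R \to S$ is étale, since the natural map $D^{n-1}_{R|K} \otimes_R S \to D^{n-1}_{S|K}$ is then an isomorphism. In our setting this map is an iso after localizing at every height-one prime of $S$; since $D^{n-1}_{S|K}$ is a reflexive $S$-module (Hom into the normal ring $S$), the codimension-one extension glues uniquely to a global $\hat\delta \in D^{n-1}_{S|K}$.

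Part (2) follows the same pattern. For $I_q(R) \subseteq R \cap I_q(S)$, given $\psi \in \Hom_S(S^{1/q}, S)$ and $s \in S$, the map $r^{1/q} \mapsto \Tr_{L/K}(us\, \psi(r^{1/q}))$ is $R$-linear (using $S$-linearity of $\psi$ and $R$-linearity of $\Tr_{L/K}$), so it is an element of $\Hom_R(R^{1/q}, R)$; the same trace-trick argument then yields $\psi(f^{1/q}) \in \n$ for $f \in I_q(R)$. For the reverse containment I extend each element of $\Hom_R(R^{1/q}, R)$ to $\Hom_S(S^{1/q}, S)$ by the étale-in-codimension-one plus reflexivity argument applied to the reflexive $S$-module $\Hom_S(S^{1/q}, S)$. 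The main obstacle in both parts is the clean justification of this extension step: checking the codimension-one isomorphism of operator modules (standard for étale maps) and then leveraging reflexivity over the normal ring $S$ to promote it to a global identification.
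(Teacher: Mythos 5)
Your argument is in substance the same as the paper's: the paper disposes of part (1) by citing \cite[Proposition~6.14]{BJNB} and of the extension step in part (2) by citing \cite[Lemma~3.5, Corollary~3.7]{STFiniteMaps}, and those cited proofs run exactly along your lines --- trace/splitting for the containment $\m^{\lr{n}_K}\subseteq R\cap\n^{\lr{n}_K}$ (resp.\ $I_q(R)\subseteq R\cap I_q(S)$), and extension of operators via the \'etale-in-codimension-one hypothesis plus reflexivity for the reverse containment. Your trace-trick direction, including the commutator induction and the unit argument via $s=\tilde\delta(f)^{-1}$, is complete and correct. The one point to tighten is the formulation of the extension step: for a general ring map there is no natural map $D^{n-1}_{R|K}\otimes_R S\to D^{n-1}_{S|K}$; the correct statement is that the restriction map $D^{n-1}_{S|K}=\Hom_S(P^{n-1}_{S|K},S)\to \Hom_R(P^{n-1}_{R|K},S)$ (differential operators from $R$ to $S$) is a map of reflexive $S$-modules that is an isomorphism at height-one primes, hence globally, and one lifts $\iota\circ\delta$ through it; the same reformulation applies to $\Hom_S(S^{1/q},S)\to\Hom_R(R^{1/q},S)$. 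Note also that reflexivity here quietly uses the finiteness hypotheses (finitely generated modules of principal parts, F-finiteness) under which the paper's applications take place.
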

\begin{proof}
The first part is \cite[Proposition~6.14]{BJNB}. The second proceeds along the same lines as \textit{ibid.} By \cite[Lemma~3.5, Corollary~3.7]{STFiniteMaps}, every Cartier map on $R$ extends to $S$, and since $R$ is a direct summand of $S$, every Cartier map on $S$ restricts to $R$. Thus, for $f\in R$,
 we have that $f\notin I_q(R)$ if and only if there is some $\phi\in \sC^q_R$ such that $\phi(f)=1$, which happens if and only if there is a $\psi\in \sC^q_S$ such that $\psi(f)=1$, which in turn is equivalent to $f \notin I_q(S)$.
\end{proof}

We can now apply Theorem~\ref{main-theorem} to differential signature and F-signature.

\begin{theorem}\label{transformation-theorem} Let $(R,\m,k)\subseteq (S,\n,l)$ be a module-finite local inclusion of normal Henselian domains that is split and quasi-{\'e}tale.
\begin{enumerate}
\item If $R$ and $S$ are algebras with a pseudocoefficient field $K$, then we have ${[S:R] \diffsig{K}(R)=[l:k] \diffsig{K}(S)}$.
\item \cite[Theorem~B]{Javier-Karl-Kevin} If $R$ and $S$ have characteristic $p > 0$, then we have $[S:R] \fsig(R)=[l:k] \fsig(S)$.
\end{enumerate}
\end{theorem}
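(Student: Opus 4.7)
The plan is straightforward: this theorem is a verification that the assignments $\mathcal{DP}$ and $\mathcal{SI}$ satisfy the three hypotheses of Theorem~\ref{main-theorem}, at which point the result is immediate (after normalizing by $d!$ in the differential case, using that $\dim R = \dim S$ for a finite local extension). So the proof will consist of two parallel verifications, one for each assignment, followed by an invocation of the main theorem.

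For the differential signature, I would work in the category $\mathcal{R}$ of local $K$-algebras with pseudocoefficient field $K$ with $K$-algebra maps. The characteristic property of $\m^{\lr{n}_K}$ is the content of the preceding lemma, and the intersection property is Lemma~\ref{lem:contraction}(1). For boundedness, I would observe that any $K$-linear differential operator of order at most $n-1$ lowers $\m$-adic order by at most $n-1$, so $\m^n \subseteq \m^{\lr{n}_K}$ and the bound $C=1$ works. Applying Theorem~\ref{main-theorem} to $\mathcal{DP}$ gives $[S:R]\,\e^+_{\mathcal{DP}}(R) = [l:k]\,\e^+_{\mathcal{DP}}(S)$, and dividing by $d!$ yields the stated equality for $\diffsig{K}$. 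For the F-signature, I would work in the category of local rings of characteristic $p>0$ and proceed identically: the preceding lemma gives characteristic, Lemma~\ref{lem:contraction}(2) gives the intersection property, and boundedness of $\mathcal{SI}$ (that there exists $C$ with $\m^{Cq}\subseteq I_q(R)$) is standard and can be cited from the F-signature literature, e.g.\ \cite{TuckerFSig, CraigSurvey}. Theorem~\ref{main-theorem} then yields equality of both upper and lower multiplicities; since the F-signature is known to exist as an honest limit, the common value is $\fsig$ and part (2) follows.

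The only point that requires any real thought is boundedness of $\mathcal{DP}$; once one notes that commutators with multiplication operators strictly decrease the order of a differential operator, the containment $\m^n\subseteq \m^{\lr{n}_K}$ drops out of a short induction. The rest of the argument is genuinely a direct application of the framework set up in Section~\ref{sec-2}, together with the characteristic and intersection property lemmas already proved in this section.
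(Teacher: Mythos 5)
Your proposal is correct and follows essentially the same route as the paper: both proofs reduce the theorem to checking the three hypotheses of Theorem~\ref{main-theorem}, taking the characteristic and intersection properties from the two preceding lemmas and verifying boundedness via the containments $\m^n \subseteq \m^{\lr{n}_K}$ and $\m^{[q]} \subseteq I_q(R)$ (the paper states these containments without elaboration, while you sketch the order-lowering argument for the first and cite the literature for the second). The remarks about dividing by $d!$ and about the upper/lower multiplicities are correct bookkeeping that the paper leaves implicit.
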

\begin{proof}
Since $\m^n \subseteq \m^{\lr{n}_K}$ for all local $K$-algebras, differential powers are bounded. Similarly, if $(R,\m)$ is local of positive characteristic, and $\mu(\m)$ is the minimal number of generators of $\m$, we have $\m^{\mu(\m)q} \subseteq \m^{[q]} \subseteq I_q(R)$ so the family of ideals $I_q(R)$ is bounded. Then, using the last two lemmas, the result then follows from Theorem~\ref{main-theorem}.
\end{proof}

\begin{remark}
We recall that the Henselization $R^h$ of a local ring $(R, \m)$ can be constructed as follows; see \cite[Th\`eor\'eme~1]{Raynaud}.
We consider the category of pairs $(S, \n)$ where $R \to S$ is  {\' etale}, $\n \cap R = \m$,
and $k(\n) = k(\m)$. 
Then $R^h = \varinjlim S$ with morphisms given by dominance at $\n$.
\end{remark}

\begin{lemma}\label{lem-leq1}
	Let $R$ be a local algebra essentially of finite type over a field $K$, and suppose that $R$ contains a coefficient field. Then $\diffsig{K}(R^h) = \diffsig{K}(R) \leq 1$.
\end{lemma}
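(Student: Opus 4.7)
The lemma has two assertions. The bound $\diffsig{K}(R) \leq 1$ is a general property of local $K$-algebras essentially of finite type with coefficient field $K$, established in \cite{BJNB}; so I will focus the plan on the equality $\diffsig{K}(R^h) = \diffsig{K}(R)$.

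My approach uses the description of the Henselization as a filtered colimit $R^h = \varinjlim_S S$ indexed by pointed \'etale $R$-algebras $(S, \n_S)$ as in the Remark above. For each such \'etale step, the module of differential operators satisfies $D^n_{S|K} \cong S \otimes_R D^n_{R|K}$. Adapting the proof of Lemma~\ref{lem:contraction} (which is not literally applicable, since the maps $R \to S$ need not be finite, but whose argument is driven precisely by the equality on differential operators) one obtains both the intersection property $\m^{\lr{n}_K} = R \cap \n_S^{\lr{n}_K}$ and the containment $\m^{\lr{n}_K} \cdot S \subseteq \n_S^{\lr{n}_K}$.

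Next, I would compare lengths. By faithful flatness of $R \to S$ and trivial residue extension, $\length_S(S/\m^{\lr{n}_K} S) = \length_R(R/\m^{\lr{n}_K})$. Combining the surjection $S/\m^{\lr{n}_K} S \twoheadrightarrow S/\n_S^{\lr{n}_K}$ with the injection $R/\m^{\lr{n}_K} \hookrightarrow S/\n_S^{\lr{n}_K}$ coming from the intersection property, a length count then forces $\length_S(S/\n_S^{\lr{n}_K}) = \length_R(R/\m^{\lr{n}_K})$. To pass to the colimit, I would check that $(\m^h)^{\lr{n}_K} = \bigcup_S \n_S^{\lr{n}_K}$ inside $R^h$, so that $R^h/(\m^h)^{\lr{n}_K} \cong \varinjlim_S S/\n_S^{\lr{n}_K}$; each transition map is an injection of finite-length modules of equal length, hence an isomorphism, and the colimit has length $\length_R(R/\m^{\lr{n}_K})$. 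Dividing by $n^d$ and taking limsup yields $\diffsig{K}(R^h) = \diffsig{K}(R)$.

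The main technical obstacle is the identification $(\m^h)^{\lr{n}_K} = \bigcup_S \n_S^{\lr{n}_K}$. Given $f \in (\m^h)^{\lr{n}_K}$, the key input is the finite generation of $D^{n-1}_{R|K}$ as an $R$-module, valid since $R$ is essentially of finite type over $K$: this reduces the defining condition $D^{n-1}_{R^h|K} \cdot f \subseteq \m^h$ to finitely many conditions $\tilde\delta_i(f) \in \m^h$ on generators $\delta_i$ of $D^{n-1}_{R|K}$, each of which is detected at a finite stage in the \'etale system via $\m^h \cap S = \n_S$, so that $f \in \n_S^{\lr{n}_K}$ for some $S$ large enough.
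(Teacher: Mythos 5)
Your proposal is correct, and it rests on the same key fact as the paper's proof: base change for differential operators along (formally) \'etale maps. But the bookkeeping is genuinely different. The paper works directly with the single map $R\to R^h$: it invokes the isomorphism $R^h\otimes_R D^n_{R|K}\cong D^n_{R^h|K}$ (formally \'etale plus finite presentation), uses that every $\m R^h$-primary ideal of $R^h$ is expanded from $R$ because $\widehat{R}\to\widehat{R^h}$ is an isomorphism, and then runs the argument of \cite[Lemma~3.11]{BJNB} to conclude $(\m R^h)^{\lr{n}_K}=\m^{\lr{n}_K}R^h$, from which the length equality is immediate. You instead stay at finite \'etale stages $S$ of the colimit, prove the intersection property and the expansion containment there, squeeze lengths using flatness and the trivial residue extension, and then pass to the colimit. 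Your route avoids the completion argument and the citation to \cite[Lemma~3.11]{BJNB}, at the cost of the extra step $(\m^h)^{\lr{n}_K}=\bigcup_S \n_S^{\lr{n}_K}$; both are legitimate, and in your setting the \'etale-neighborhood version of Lemma~\ref{lem:contraction} is actually easier than the codimension-one version, since operators extend by honest base change rather than by reflexivity arguments.

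Two small points to tighten. First, in the colimit step you only argue the inclusion $(\m^h)^{\lr{n}_K}\subseteq\bigcup_S\n_S^{\lr{n}_K}$; the reverse inclusion also needs a word, though it follows from the same base change $D^{n-1}_{R^h|K}\cong R^h\otimes_S D^{n-1}_{S|K}$ that you already use. Second, finite generation of $D^{n-1}_{R|K}$ is necessary but not by itself sufficient for reducing $D^{n-1}_{R^h|K}\cdot f\subseteq\m^h$ to finitely many conditions: you also need that the base change map $R^h\otimes_R D^{n-1}_{R|K}\to D^{n-1}_{R^h|K}$ is surjective (equivalently, an isomorphism), so that the images of generators of $D^{n-1}_{R|K}$ generate $D^{n-1}_{R^h|K}$ over $R^h$. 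This is exactly the point for which the paper cites \cite[2.2.10]{Masson}, and it follows for you by taking the filtered colimit of the stage-wise isomorphisms $D^{n}_{S|K}\cong S\otimes_R D^{n}_{R|K}$, using that $\Hom$ out of the finitely presented module of principal parts commutes with flat base change. With those two remarks made explicit, your argument is complete.
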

\begin{proof}
First, we note that the natural map $R\to R^{h}$ is formally \'etale as a colimit of \'etale algebras
\cite[Lemma~07QN]{stacks-project}, and the modules of differentials of $R$ are finitely presented. Thus the natural map $R^h \otimes_R D^n_{R|K} \to D^n_{R^h | K}$ is an isomorphism for all $n$ by \cite[2.2.10]{Masson}. 
We note also that every ideal primary to the maximal ideal of the Henselization is expanded from $R$, since the map $\widehat{R} \to \widehat{R^h}$ is an isomorphism \cite[Lemma~06LJ]{stacks-project}. 
The same argument as in \cite[Lemma~3.11]{BJNB} then shows that $(\m R^h)^{\lr{n}_K} = \m ^{\lr{n}_K} R^h$. We then have $\length_R(R/\m ^{\lr{n}_K}) = \length_{R^h}({R^h}/(\m R^h)^{\lr{n}_K})$ for all $n$, and the equality $\diffsig{K}(R^h) = \diffsig{K}(R)$ follows. The second inequality is \cite[Proposition~4.20]{BJNB}.
\end{proof}

We apply Theorem~\ref{transformation-theorem} to give an effective bounds for \'etale fundamental groups.

\begin{theorem}[Bound on local \'etale fundamental groups]\label{cor-fundl-group}
	Let $(R,\m,k)$ be the Henselization of a local ring essentially of finite type over a separably closed field $K\cong k$. Suppose that $R$ is a normal domain of dimension at least two. If $K$ has positive characteristic, assume also that $R$ is F-pure. 
	
	If the differential signature of $R$ is positive, then for any open subset $U$ of $\Spec(R)$ for which the complement has codimension at least two,  $\pi_1^{\text{\'et}}(U)$ is bounded above by ${1/\diffsig{K}(R) < \infty}$.
\end{theorem}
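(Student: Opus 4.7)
The plan is to exhibit a uniform bound $|G| \leq 1/\diffsig{K}(R)$ over all finite quotients $G$ of $\pi_1^{\text{\'et}}(\Spec^{\circ}(R))$; since this group is profinite, such a uniform bound forces it to be finite of order at most $1/\diffsig{K}(R)$.

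For each such $G$, I would pass via Grothendieck's classification to the corresponding connected Galois \'etale cover of $\Spec^{\circ}(R)$ and take $S$ to be the integral closure of $R$ in its function field $L$. Since $R$ is excellent and normal of dimension at least two, $S$ is a module-finite normal extension of $R$, \'etale over $R$ in codimension one, with $[S:R] = |G|$. Being a module-finite extension of the Henselian local domain $R$ and itself a domain, $S$ is local and Henselian. A decomposition/inertia analysis at the unique maximal ideal of $S$, combined with $k$ being separably closed, then yields $[l:k] = 1$.

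Next I would establish that $R \hookrightarrow S$ is split as $R$-modules. In characteristic zero, the map $\tfrac{1}{[S:R]}\Tr_{L/K}$ lands in $R$ by Lemma~\ref{lem:trace}(\ref{lem:trace-2}) and sends $1$ to $1$, providing a splitting directly. In positive characteristic, the $F$-purity of $R$ together with the fact that Cartier maps extend and restrict along finite maps \'etale in codimension one (as exploited in the proof of Lemma~\ref{lem:contraction}) produces an $S$-linear map $S \to R$ sending $1$ to $1$; equivalently, by Lemma~\ref{lem:trace}(\ref{lem:trace-5}) there is a unit $u \in S$ with $\Tr_{L/K}(u) = 1$, whence $\Tr_{L/K}(u \cdot -)$ is a splitting as in Corollary~\ref{cor:tracesplitting}. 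With these hypotheses in hand, Theorem~\ref{transformation-theorem}(1) gives
\[ [S:R]\, \diffsig{K}(R) = \diffsig{K}(S). \]
Since $S$ is itself the Henselization of a local $K$-algebra essentially of finite type inheriting $K$ as a coefficient field (using $l = k = K$ together with Cohen's structure theorem on a finite-type descent of $S$), Lemma~\ref{lem-leq1} yields $\diffsig{K}(S) \leq 1$, and therefore $|G| = [S:R] \leq 1/\diffsig{K}(R)$, as required.

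The main obstacles are, first, producing the splitting in positive characteristic from only the $F$-purity assumption: the challenge is that when $p \mid [S:R]$ the naive normalization $\tfrac{1}{[S:R]}\Tr_{L/K}$ degenerates, so one must leverage $F$-pure splittings of Frobenius on $R$ (extended to $S$) together with Lemma~\ref{lem:trace}(\ref{lem:trace-5}), which identifies $\Hom_R(S,R)$ as a cyclic $S$-module generated by $\Tr_{L/K}$, to produce a unit $u \in S$ with $\Tr_{L/K}(u) = 1$. Second, the triviality $[l:k] = 1$ when $k$ is merely separably closed (possibly imperfect) in positive characteristic requires some care, since \'etaleness in codimension one does not a priori rule out a purely inseparable residue field extension at the closed point; one must use the Henselian Galois theory at the unique maximal ideal of the normalization to close this gap.
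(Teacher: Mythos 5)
Your overall architecture matches the paper's: bound each finite quotient $G$ of $\pi_1^{\text{\'et}}$ by passing to the normalization $S$ of $R$ in the corresponding cover, show $[l:k]=1$ and that $R\to S$ splits, apply Theorem~\ref{transformation-theorem}(1), and descend $S$ to an essentially-finite-type algebra so that Lemma~\ref{lem-leq1} gives $\diffsig{K}(S)\leq 1$. However, there is a genuine gap in your positive-characteristic splitting step. $F$-purity alone does \emph{not} make $R$ a direct summand of a module-finite extension that is \'etale in codimension one: $F$-pure normal domains need not be splinters (e.g., cones over ordinary elliptic curves), and your proposed mechanism does not close this. Extending a Frobenius splitting $R^{1/q}\to R$ to a map $S^{1/q}\to S$ produces a map in the wrong direction and says nothing about $\Hom_R(S,R)$; the ``restriction'' half of Lemma~\ref{lem:contraction} already presupposes that $R$ is a direct summand of $S$; and invoking Corollary~\ref{cor:tracesplitting} or Lemma~\ref{lem:trace}(\ref{lem:trace-5}) to produce a unit $u$ with $\Tr_{L/K}(u)=1$ is circular, since the existence of such a $u$ is exactly equivalent to the splitting you are trying to establish (knowing that $\Tr_{L/K}$ generates $\Hom_R(S,R)$ only tells you the splitting exists if the ideal $\Tr_{L/K}(S)$ is all of $R$, which is the point at issue). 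The paper's route is different and uses the positivity hypothesis you never invoke here: $F$-pure plus $\diffsig{K}(R)>0$ implies $R$ is strongly $F$-regular \cite[Theorem~5.17]{BJNB}, and strongly $F$-regular rings are direct summands of all module-finite extensions.

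Two smaller points. First, you correctly flag that $[l:k]=1$ needs an argument when $k$ is separably closed but imperfect, but you leave it open; the paper closes it by citing \cite[Lemma~2.15]{Javier-Karl-Kevin}, which shows the residue field extension is separable for these covers. Second, the descent of $S$ to (the Henselization of) a local algebra essentially of finite type over $K$ is not a consequence of Cohen's structure theorem, which concerns complete local rings; the paper instead constructs the descent explicitly by choosing an \'etale neighborhood $U$ of the finite-type model of $R$ containing the structure constants of $S$ over $R$ and setting $V=\sum U s_i$, so that $S\cong V\otimes_U R$ is the Henselization of $V$. You should supply an argument of this kind rather than a gesture at Cohen's theorem.
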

\begin{proof}
	This follows from Theorem~\ref{transformation-theorem} along the same lines as \cite[Theorem~A]{Javier-Karl-Kevin}. In particular, as in \emph{ibid.}, it suffices to show that if $(R,\m,k)\subseteq (S,\n,l)$ is module-finite and \'etale on $U$, then $[S:R] \leq 1 / \diffsig{K}(R)$. As the complement of $U$ cannot contain a prime of height one, neither can its preimage in $\Spec(S)$, so such a map is quasi-{\'e}tale.
	
Observe that if $(R,\m,k)\subseteq (S,\n,l)$ is module-finite, then it splits as $R$-modules: if $K$ has characteristic zero, the trace map yields a splitting, while if $R$ has positive characteristic, then $R$ is strongly F-regular, since $R$ is F-pure and has positive differential signature \cite[Theorem~5.17]{BJNB}, and hence is a direct summand of any module-finite extension.

Furthermore, $k\to l$ is an isomorphism: this is algebra-finite, hence module-finite; in characteristic zero $k$ is algebraically closed, and in positive characteristic, by \cite[Lemma~2.15]{Javier-Karl-Kevin} the map is separable, hence an isomorphism. By Theorem~\ref{transformation-theorem}, for such an $S$, we have $[S:R] < \diffsig{K}(S)/\diffsig{K}(R)$.

We claim that $S$ as above can be realized as the Henselization of a local algebra $V$ essentially of finite type over a field $K$ that contains $K$. Write $R=T^h$ with $T$ essentially of finite type over the coefficient field $K$. By the module-finite hypothesis, we can write $S=\sum_{i=1}^t R s_i$ for some finite set of elements $s_i\in S$, with the algebra structure given by the rules $s_i s_j = \sum_{m=1}^t r_{ijm} s_m$. By the construction of Henselization there exists some local ring $A$ such that $A$ is essentially of finite type over $K$, $T\subseteq A \subseteq R$, $A$ is \'etale over $T$, and all of the elements $r_{ijm}$ lie in $A$. Now, let $V=\sum_{i=1}^t A s_i \subseteq S$. We then have $V \otimes_A R \cong S$, and it follows that $S$ is the Henselization of $V$, as claimed. We can then apply Lemma~\ref{lem-leq1} to find that $\diffsig{K}(S) \leq 1$. The theorem then follows.
\end{proof}

\begin{corollary}\label{cor-bound-on-efg}
	Let $X$ be a normal variety of dimension at least two over an algebraically closed field $K$ of characteristic $0$, and $x\in X$ be a closed point. Let $Z\subseteq X$ be a closed subvariety of codimension at least two that contains $x$. Let $R=\mathcal{O}_{X,x}^h$, and let $\tilde{Z}$ be preimage of $Z$ under the composition $\Spec(R) \to \Spec(\mathcal{O}_{X,x}) \to X$.
	Then, $|\pi_1^{\text{\'et}}(\Spec(R)\smallsetminus \tilde{Z})|$ is bounded above by $1/\diffsig{K}(R)$.
	
	 In particular, $|\pi_1^{\text{\'et}}(\Spec^{\circ}(R))|$ is bounded above by $1/\diffsig{K}(R)$.
\end{corollary}

The finiteness statement for \'etale fundamental groups of punctured spectra under the hypothesis that $R$ has KLT singularities over the complex numbers was first shown by Xu \cite{Xu}, and extended by \cite{Bharagav-and-people}.
The effective bound in positive characteristic, with F-signature in place of differential signature, was shown by \cite{Javier-Karl-Kevin}.
The following result is a theorem of Greb, Kebekus, and Peternell \cite[Theorem~1.5]{GKP} under the hypothesis that $X$ has KLT singularities over the complex numbers; the positive characteristic analogue is also known \cite{BCGST}.

\begin{corollary}\label{cor-etale-covers}
Suppose that $X$ is a normal noetherian scheme of finite type over an algebraically closed field $K$ of characteristic $0$. Suppose that for every closed point $x \in X$ 
we have $\diffsig{K}(\mathcal{O}_{X,x}) < \infty$. 
Then for any tower of quasi-{\' e}tale covers of $X$, $X \leftarrow X_1 \leftarrow X_2 \leftarrow X_3 \cdots$,
we must have that $X_i \leftarrow X_{i + 1}$ is {\' e}tale for $i$ sufficiently large.

In particular, there exists a finite, Galois, quasi-{\'e}tale cover $Y\to X$ by a normal scheme $Y$ with the property that any {\'e}tale cover of the regular locus of $Y$ extends to an {\'e}tale cover of $Y$.
\end{corollary}
\begin{proof}
Follows directly from Corollary~\ref{cor-bound-on-efg} and a result of Stibitz \cite[Theorem~1]{Stibitz}.
\end{proof}


\begin{remark}
	\begin{enumerate}
		\item The bound in Theorem~\ref{cor-fundl-group} can be sharp, e.g., for Du Val singularities; see \cite[Theorem~7.1]{BJNB}. However, it is not always sharp, e.g., for determinantal singularities; see  \cite[Theorem~7.1]{BJNB} and \cite[Example~4.0.10]{JavierThesis}.
	
		\item 	It is not known whether the differential signature of the local ring of a Kawamata log-terminal singularity is positive in general, though it is true for direct summands of regular rings and quadric hypersurfaces. If this were true, this would give a characteristic-free proof of the results \cite{Xu, Javier-Karl-Kevin} mentioned above.
		
		\item It is conjectured that the analogous statements to Theorem~\ref{transformation-theorem} and Corollary~\ref{cor-fundl-group} hold with another numerical invariant called the \emph{normalized volume} \cite{LiLiuXu}. To our knowledge, this remains open, and Corollary~\ref{cor-fundl-group} gives the only direct effective numerical bounds on local \'etale fundamental groups in characteristic zero. It is worth to note that by \cite{Blum} the normalized volume can be computed as the volume of a single valuation, so it is also a multiplicity associated to a family of ideals. 
		
		\item In the classes of examples $R$ for which differential signature and F-signature are both known in characteristic $p$, the inequality $\diffsig{K}(R) \leq \fsig(R)$ holds, so that the latter gives stronger bounds on the local \'etale fundamental group. We do not know whether this inequality holds in general.
		
		\item There are other interesting asymptotic multiplicities of singularities that are known to satisfy the transformation rule in special cases, for example, the syzygy symmetric signature and the differential symmetric signature of Brenner and Caminata \cite{BrennerCaminata, BrennerCaminata2}.  We do not know a description of these as multiplicities associated to assignments of ideals. It is natural to ask whether these numerical invariants also fit into the hypotheses of Theorem~\ref{main-theorem}.
				
		\item We note that conclusion of Theorem~\ref{cor-fundl-group} holds for a multiplicity satisfying the hypotheses of Theorem~\ref{main-theorem} if the analogue of Lemma~\ref{lem-leq1} holds for this invariant.

	\end{enumerate}
\end{remark}

\section{Examples}

We now give a couple of examples to illustrate that a generalized transformation rule 
\[ [S:R] \, \vol_R(J_\bullet \cap R) = [l:k] \, \vol_S(J_\bullet) \]
for volumes of sequences of ideals $J_\bullet$, which a fortiori satisfies the intersection property, fails in lieu of the other assumptions employed in Theorem~\ref{main-theorem}.

\begin{example} 
	Consider $R = k \llbracket a,b, x^3 + x^2a + xb \rrbracket  \subset S = k \llbracket a,b,x \rrbracket $. Set $J=(a,b,x)$ in $S$. Note that the powers $J^\bullet$ of $J$ are characteristic ideals, but the inclusion map from $R$ to $S$ is not quasi-{\'e}tale.
	
	Since the contraction is integrally closed and 
	\[
	(x^3 + x^2a + xb)^{\lceil n/2 \rceil} = x^{\lceil n/2 \rceil} (x( x + a) + b)^{\lceil n/2 \rceil}
	\subseteq x^{\lceil n/2 \rceil} (x, b)^{\lceil n/2 \rceil} \subseteq (x, a, b)^n, 
	\]
	we obtain that
	\[
	J^n \cap R \supseteq \overline{(a^n, b^n, (x^3 + x^2a + xb)^{\lceil n/2 \rceil})}.
	\]
	
	In fact, this is an equality. Given a polynomial $F(u,v,w)$, the order of $F(a,b,x^3+x^2a+xb)$ as an element of $S$ is equal to its minimal degree under the weighting $|u|=|v|=1, |w|=2$. Then, since any element of $J^n \cap R$ has order at least $n$, any such element must lie in the integral closure of $(a^n,b^n,(x^3 + x^2a + xb)^{\lceil n/2 \rceil})$.	
	Since $(a^2, b^2, x^3 + x^2a + xb)^n \subseteq  \overline{(a^{2n}, b^{2n}, (x^3 + x^2a + xb)^{n})}$, 
	\begin{align*}\vol(J^\bullet \cap R) &=	\lim_{n \to \infty} \frac{\length_R \left (R/\overline{(a^n, b^n, (x^3 + x^2a + xb)^{\lceil n/2 \rceil})}\right )}{n^3} \\
	&= \frac {1}{48}\eh (a^2, b^2, x^3 + x^2a + xb) = \frac {1}{12} \eh(R) = \frac {1}{12}.\end{align*}
On the other hand, $\vol(J^\bullet)=\frac{1}{6}$, and $[S:R]=3$.
\end{example}

\begin{example}
	Consider $R=k \llbracket x^2,xy,y^2 \rrbracket \subset S=k\llbracket x,y\rrbracket$. Take the sequence of ideals $J_n=(x^{4n},y^{4n},x^{2n+1}-y^{2n})$. Note that the inclusion of $R$ into $S$ is split and quasi-{\'e}tale, but the ideals $J_n$ are not characteristic.
	
	First, one has that $\vol(J_\bullet)=8$. To see this, we note that $J_n=(x^{2n+1}-y^{2n},y^{4n},x^{2n-1}y^{2n})$, and that this generating set form a Gr\"obner basis with respect to the reverse lexicographic ordering. Then, the colength of $J_n$ is the colength of $(x^{2n+1},y^{4n},x^{2n-1}y^{2n})$, which is $8n^2$.
	
	Next, we claim that $J_n\cap R=(x^{4n},x^{2n}y^{2n},x^{2n-1}y^{2n+1},y^{4n})$. The containment ``$\supseteq$'' is clear. To see the other, suppose that $F=A x^{4n} + B y^{4n} + C (x^{2n+1}-y^{2n}) \in J_n$ and consists of only even degree terms. We can assume without loss of generality that $F$ contains no monomial in the ideal $(x^{4n},y^{4n})$. We will show that in this case that $C\in (x^{2n-1},y^{2n})\cap R$, which will justify the claim. Indeed, if $C$ has a nonzero monomial $x^i y^j$ with $i+j$ even, then the odd degree monomial $x^{2n+1+i} y^j$ must be canceled by a term of the form $x^{i'} y^{j'} y^{2n} = x^{i'} y^{2n+j'}$ with $i'=2n+1+i$ and $2n+j'=j$, and $x^{i'} y^{j'}$ a supporting monomial of $C$, or else $2n+1+i$ must be at least $4n$. This implies that this monomial lies in the specified ideal. A similar argument covers the case when $i+j$ is odd.
	
	An easy monomial count then yields that $\vol_R(J_\bullet \cap R)=6$.
\end{example}

\section*{Acknowledgements}

The authors are grateful for helpful conversations with Holger Brenner, Javier Carvajal-Rojas, Elo\'isa Grifo, Mel Hochster, Craig Huneke, Luis N\'u\~nez-Betancourt, Karl Schwede, and Axel St\"abler. We thank the referee for many valuable suggestions and corrections. The first author was supported by NSF Grant DMS~\#1606353.

\bibliographystyle{alpha}
\bibliography{References}

\newcommand{\etalchar}[1]{$^{#1}$}
\def\cprime{$'$} \def\cprime{$'$}
\begin{thebibliography}{BCRG{\etalchar{+}}19}

\bibitem[AE05]{AE}
Ian~M. Aberbach and Florian Enescu.
\newblock The structure of {$F$}-pure rings.
\newblock {\em Math. Z.}, 250(4):791--806, 2005.

\bibitem[AL03]{AL}
Ian~M. Aberbach and Graham~J. Leuschke.
\newblock The {$F$}-signature and strong {$F$}-regularity.
\newblock {\em Math. Res. Lett.}, 10(1):51--56, 2003.

\bibitem[BC17]{BrennerCaminata}
Holger Brenner and Alessio Caminata.
\newblock The symmetric signature.
\newblock {\em Comm. Algebra}, 45(9):3730--3756, 2017.

\bibitem[BC19]{BrennerCaminata2}
Holger Brenner and Alessio Caminata.
\newblock Differential symmetric signature in high dimension.
\newblock {\em Proc. Amer. Math. Soc.}, 2019.
\newblock To appear.

\bibitem[BCRG{\etalchar{+}}19]{BCGST}
Bhargav Bhatt, Javier Carvajal-Rojas, Patrick Graf, Karl Schwede, and Kevin
  Tucker.
\newblock \'{E}tale fundamental groups of strongly {$F$}-regular schemes.
\newblock {\em Int. Math. Res. Not. IMRN}, (14):4325--4339, 2019.

\bibitem[BGO17]{Bharagav-and-people}
Bhargav Bhatt, Ofer Gabber, and Martin Olsson.
\newblock Finiteness of \'etale fundamental groups by reduction modulo $ p$.
\newblock {\em arXiv:1705.07303}, 2017.

\bibitem[BJN19]{BJNB}
Holger Brenner, Jack Jeffries, and Luis {N{\'u}{\~n}ez-Betancour}t.
\newblock Quantifying singularities with differential operators.
\newblock {\em Adv. Math.}, 358:106843, 89~pp., 2019.

\bibitem[Blu18]{Blum}
Harold Blum.
\newblock Existence of valuations with smallest normalized volume.
\newblock {\em Compos. Math.}, 154(4):820--849, 2018.

\bibitem[BS13]{Cartiersurv}
Manuel Blickle and Karl Schwede.
\newblock {$p^{-1}$}-linear maps in algebra and geometry.
\newblock In {\em Commutative algebra}, pages 123--205. Springer, New York,
  2013.

\bibitem[CR17]{JavierTorsor}
Javier Carvajal-Rojas.
\newblock Finite torsors over strongly {$F$}-regular singularities.
\newblock {\em arXiv:1710.06887}, 2017.

\bibitem[CR18]{JavierThesis}
Javier~A Carvajal-Rojas.
\newblock Arithmetic aspects of strong {F}-regularity, Ph.D. thesis, The
  University of Utah, 2018.

\bibitem[CRST18]{Javier-Karl-Kevin}
Javier Carvajal-Rojas, Karl Schwede, and Kevin Tucker.
\newblock Fundamental groups of {$F$}-regular singularities via
  {$F$}-signature.
\newblock {\em Ann. Sci. \'{E}c. Norm. Sup\'{e}r. (4)}, 51(4):993--1016, 2018.

\bibitem[DDSG{\etalchar{+}}18]{SurveySP}
Hailong Dao, Alessandro De~Stefani, Elo{\'i}sa Grifo, Craig Huneke, and Luis
  N{\'u}{\~n}ez-Betancourt.
\newblock Symbolic powers of ideals.
\newblock In {\em Singularities and foliations. geometry, topology and
  applications}, volume 222 of {\em Springer Proc. Math. Stat.}, pages
  387--432. Springer, Cham, 2018.

\bibitem[GKP16]{GKP}
Daniel Greb, Stefan Kebekus, and Thomas Peternell.
\newblock \'{E}tale fundamental groups of {K}awamata log terminal spaces, flat
  sheaves, and quotients of abelian varieties.
\newblock {\em Duke Math. J.}, 165(10):1965--2004, 2016.

\bibitem[Gro67]{EGAIV}
A.~Grothendieck.
\newblock \'{E}l{\'e}ments de g{\'e}om{\'e}trie alg{\'e}brique. {IV}. \'{E}tude
  locale des sch{\'e}mas et des morphismes de sch{\'e}mas {IV}.
\newblock {\em Inst. Hautes {\'E}tudes Sci. Publ. Math.}, (32):361, 1967.

\bibitem[HL02]{HLMCM}
Craig Huneke and Graham~J. Leuschke.
\newblock Two theorems about maximal {C}ohen-{M}acaulay modules.
\newblock {\em Math. Ann.}, 324(2):391--404, 2002.

\bibitem[Hun13]{CraigSurvey}
Craig Huneke.
\newblock Hilbert-{K}unz multiplicity and the {F}-signature.
\newblock In {\em Commutative algebra}, pages 485--525. Springer, New York,
  2013.

\bibitem[HW02]{HaraWatanabe}
Nobuo Hara and Kei-Ichi Watanabe.
\newblock F-regular and {F}-pure rings vs. log terminal and log canonical
  singularities.
\newblock {\em J. Algebraic Geom.}, 11(2):363--392, 2002.

\bibitem[Kol11]{Kollar}
J{\'a}nos Koll{\'a}r.
\newblock New examples of terminal and log canonical singularities.
\newblock {\em arXiv:1107.2864}, 2011.

\bibitem[Kun69]{KunzReg}
Ernst Kunz.
\newblock Characterizations of regular local rings for characteristic {$p$}.
\newblock {\em Amer. J. Math.}, 91:772--784, 1969.

\bibitem[Li18]{Li}
Chi Li.
\newblock Minimizing normalized volumes of valuations.
\newblock {\em Math. Z.}, 289(1-2):491--513, 2018.

\bibitem[LLX18]{LiLiuXu}
Chi Li, Yuchen Liu, and Chenyang Xu.
\newblock A guided tour to normalized volume.
\newblock {\em arXiv:1806.07112}, 2018.

\bibitem[Mas91]{Masson}
Gisli Masson.
\newblock {\em Rings of differential operators and \'etale homomorphisms}.
\newblock ProQuest LLC, Ann Arbor, MI, 1991.
\newblock Ph.D. Thesis, Massachusetts Institute of Technology.

\bibitem[Mil80]{MilneBook}
James~S. Milne.
\newblock {\em \'{E}tale cohomology}, volume~33 of {\em Princeton Mathematical
  Series}.
\newblock Princeton University Press, Princeton, N.J., 1980.

\bibitem[Neu99]{Neukirch}
J\"{u}rgen Neukirch.
\newblock {\em Algebraic number theory}, volume 322 of {\em Grundlehren der
  Mathematischen Wissenschaften [Fundamental Principles of Mathematical
  Sciences]}.
\newblock Springer-Verlag, Berlin, 1999.
\newblock Translated from the 1992 German original and with a note by Norbert
  Schappacher, With a foreword by G. Harder.

\bibitem[Ray70]{Raynaud}
Michel Raynaud.
\newblock {\em Anneaux locaux hens\'{e}liens}.
\newblock Lecture Notes in Mathematics, Vol. 169. Springer-Verlag, Berlin-New
  York, 1970.

\bibitem[ST14]{STFiniteMaps}
Karl Schwede and Kevin Tucker.
\newblock On the behavior of test ideals under finite morphisms.
\newblock {\em J. Algebraic Geom.}, 23(3):399--443, 2014.

\bibitem[{Sta}18]{stacks-project}
The {Stacks Project Authors}.
\newblock \textit{Stacks Project}.
\newblock \url{https://stacks.math.columbia.edu}, 2018.

\bibitem[Sti17]{Stibitz}
Charlie Stibitz.
\newblock {\'E}tale covers and local algebraic fundamental groups.
\newblock {\em arXiv preprint arXiv:1707.08611}, 2017.

\bibitem[SVdB97]{SmithVDB}
Karen~E. Smith and Michel Van~den Bergh.
\newblock Simplicity of rings of differential operators in prime
  characteristic.
\newblock {\em Proc. London Math. Soc. (3)}, 75(1):32--62, 1997.

\bibitem[Tuc12]{TuckerFSig}
Kevin Tucker.
\newblock {$F$}-signature exists.
\newblock {\em Invent. Math.}, 190(3):743--765, 2012.

\bibitem[Xu14]{Xu}
Chenyang Xu.
\newblock Finiteness of algebraic fundamental groups.
\newblock {\em Compos. Math.}, 150(3):409--414, 2014.

\bibitem[Yao06]{Yao}
Yongwei Yao.
\newblock Observations on the {$F$}-signature of local rings of characteristic
  {$p$}.
\newblock {\em J. Algebra}, 299(1):198--218, 2006.

\end{thebibliography}

\end{document}